\documentclass[12pt]{amsart}
\usepackage[margin=1in]{geometry}
\usepackage{latexsym}
\usepackage{float}
\usepackage{amssymb}
\usepackage[all]{xy}
\usepackage{epsfig}
\usepackage{color}
\usepackage{graphics}
\usepackage{hyperref}

\newtheorem{theorem}{Theorem}[section]

\newtheorem{Thm}[theorem]{Theorem}
\newtheorem{Lem}[theorem]{Lemma}
\newtheorem{Prop}[theorem]{Proposition}
\newtheorem{Cor}[theorem]{Corollary}

\begin{document}

\title{ Rectangle condition and its applications}
\author{Bo-hyun Kwon}
\address{Korea University, Department of Mathematics, Seoul, Republic of Korea}\email{bortire74@gmail.com}
\date{April 5,  2017}
\maketitle

\begin{abstract}
In this paper, we define the rectangle condition on the bridge sphere  for a $n$-bridge decomposition of a knot whose definition is analogous to the definition of the rectangle condition for Heegaard splittings of $3$-manifolds. We show that the satisfaction of the rectangle condition for a $n$-bridge decomposition can guarantee that the Hempel distance for the $n$-bridge decomposition is greater than or equal to $2$. In particular, we give an interesting family of alternating 3-bridge knots by using the rectangle condition and a modified train track argument.

\end{abstract}

\section{Introduction}\label{B1}

 Casson and Gordon~\cite{1} introduced the $\emph{rectangle condition}$ on   Heegaard surfaces to show strong irreducibility of Heegaard splittings of $3$-manifolds. A Heegaard splitting $V\cup_SW$ is $\emph{strongly irreducible}$ if for any pair of essential disks $D_1(\subset V)$ and $D_2(\subset W)$, $\partial D_1$ meets $\partial D_2$ in $S=V\cap W$. In other words, the Hempel distance of the Heegaard splitting is greater than or equal to two.
 We consider the Heegaard surface $S$ as the branched double covering of a $2n$-punctured sphere, denoted by $\Sigma_{0,2n}$. Then the natural question is if there is  a similar criterion  for $n$-bridge decompositions of a knot to check whether or not the Hempel distance is greater than or equal to two. Currently, K. Takao~\cite{10} defined the $\emph{well-mixed condition}$ for a $n$-bridge decomposition of a knot which is a variation of the rectangle condition  and the satisfaction of the condition  guarantees the Hempel distance to be at least two. In this paper, we  define the  \emph{rectangle condition} on the bridge sphere for a $n$-bridge decomposition of a knot based on the classical definition and show that the Hempel distance is greater than or equal to two if two pants decompositions related to the $n$-bridge decomposition satisfy the rectangle condition.  To check the satisfaction of the well-mixed condition,  two pants decompositions obtained from the definition need adjacent subarcs  in a conditional  diagram on $\Sigma_{0,2n}$. The adjacent subarcs come from different simple arcs connecting two punctures in $\Sigma_{0,2n}$.  The adjacent subarcs are possibly not parallel sides of a rectangle.  Actually, the well-mixed condition needs more conditions to be checked than the conditions for the rectangle condition we  define. For example,  the well-mixed condition needs 18 conditions to be checked  for a $3$-bridge decomposition of a knot but the rectangle condition needs 9 conditions(rectangles)  for one. So, the obvious advantage of the rectangle condition  would be less conditions to be checked. However, the parallel sides of each rectangle for the rectangle condition are usually longer than the adjacent arcs for well-mixed condition in $\Sigma_{0,2n}$. In other words, there are more obstacles to have rectangles than adjacent arcs.  So, it is hard to say that the rectangle condition is a better criterion than the well-mixed condition.  Moreover, we use the same technique to prove  the  main result, Proposition~\ref{T3}.    In order to give another effectiveness of the rectangle condition  I would give an algorithm to check whether or not two pants decompositions on the bridge sphere for a given $n$-bridge decomposition of a knot satisfy the rectangle condition in Section~\ref{B5}. The figure~\ref{A4} is a diagram that satisfies the rectangle condition but not the well-mixed condition.\\
 
  Hempel distance is a measurement of complexity for Heegaard splittings of $3$-manifolds~\cite{4}. Bachman and Schleimer~\cite{0} transfered the definition from Heegaard surfaces to $n$-bridge spheres to compute the complexity of $n$-bridge decompositions. Actually, Hempel distance for $n$-bridge decompositions of  knots in bridge sphere is one of nice tools to detect  non-perturbed knots which are in $n$-bridge position.
  We  show that if a knot $K$ satisfies the rectangle condition  for $n$-bridge decomposition $(T_1,T_2;S^2)$ of $K$ then the Hempel distance for the $n$-bridge decomposition is greater than or equal to $2$ in Section~\ref{B2}. This implies that if $K$ satisfies the rectangle condition on the $n$-bridge sphere of a $n$-bridge decomposition of $K$ then $K$ is not perturbed. Especially,  Otal~\cite{7} showed that any $n$-bridge presentation of any $2$-bridge knot is perturbed for $n > 2$.
  If $K$  has a $3$-bridge presentation  but the $3$-bridge sphere of a $3$-bridge decomposition  does not allow any perturbation then $K$ is 3-bridge knot. Therefore,  for a  knot $K$ having $3$-bridge decomposition $(T_1,T_2;S^2)$,  if $d(T_1,T_2)\geq 2$ then $K$ is a 3-bridge knot.   Coward~\cite{2} gave a theoretical method to calculate the bridge number of hyperbolic knots. However, we hardly even  know how to find  the bridge numbers of alternating knots having a $3$-bridge presentation. Especially, we wonder whether or not a knot $K$ having a reduced alternating $3$-bridge presentation is a $3$-bridge knot if the presentation is obtained from a reduced alternating presentation of a $3$-bridge knot $K'$ by adding more crossings without violating the alternating condition. We conjecture that they are all $3$-bridge knots. In order to support our conjecture, we investigate the following families.
  In Section~\ref{B6}, we  construct special families of  alternating $3$-bridge  links $N(EAT^{2k+1})$ and $D(EAT^{2k+2})$.  Then,  we show that they are  $3$-bridge links if $k\geq 1$ by using the ``Hexagon parameterization" and a modified ``train track" diagrams based on the Rectangle condition. Since the number of crossings of a reduced alternating knot diagram is the minimal number of crossings of the knot, we know that each family has an infinitely many elements.

\section{$n$-bridge decompositions and  Hempel distance}\label{B2}

Suppose $L$ is a link in $S^3$ and $S^2$ is a $2$-sphere which divides $S^3$ into two $3$-balls $B_1$ and $B_{2}$.
Assume that $L$ intersects $S^2$ transversely.  Let $\tau_i=L\cap B_i=\alpha_{i1}\cup \alpha_{i2}\cup\cdot\cdot\cdot \cup \alpha_{in}$, where $\alpha_{ij}$ are the components of $L\cap B_i$.
We note that $(S^3,L)$ is decomposed into $T_{1}:=(B_{1},\tau_{1})$ and $T_2:=(B_{2},\tau_{2})$ by $S^2$. The triple $(T_1,T_2;S^2)$ is called an $\emph{n-bridge decomposition}$ of $L$ if each $T_{i}$ is a $\emph{rational}$ $n$-tangle.  $T_i$  is said to be  $\emph{rational}$  if there exists a homeomorphism of pairs ${H}: (B^3,\tau_i)\longrightarrow
(D^2\times I,\{p_1,p_2,\cdot\cdot\cdot ,p_n\}\times I)$,  where $I=[0,1]$. Also, 
we say that $L$ is in 
$\emph{n-bridge position}$ with respect to $S^2$ if $L$ has a $n$-bridge decomposition $(T_1,T_2;S^2)$. Then consider the projection of $L$ onto the $xy$-plane so that the projection of $S^2$ is a horizontal line and the projection of $L$ has $n$ maxima and $n$ minima. Then we say that the diagram  is $n$-bridge presentation of $L$.
 \\

Let $\Sigma_{0,2n}=S^2-L$. Then we say that a simple closed curve on $\Sigma_{0,2n}$ is  $\emph{essential}$ if  neither it bounds a disk nor it is boundary parallel to a puncture.
 Also, we say that $D$ is an $\emph{essential disk}$ of $T_{i}$ if $D\subset B_i-\tau_i$ and $\partial D$ is essential in  $\Sigma_{0,2n}$. 
 The essential simple closed curves on $\Sigma_{0,2n}$ form a $1$-complex $\mathcal{C}(\Sigma_{0,2n})$ which is called the $\emph{curve complex}$ of $\Sigma_{0,2n}$. If $n>2$, the vertices of $\mathcal{C}(\Sigma_{0,2n})$ are the isotopy classes of essential simple closed curves on $\Sigma_{0,2n}$ and a pair of vertices spans an edge of $\mathcal{C}(\Sigma_{0,2n})$ if the corresponding isotopy classes can be realized as disjoint curves. 
 We define that $d([\partial D_1],[\partial D_2])$ is the minimal distance between $[\partial D_{1}]$ and $[\partial D_{2}]$ measured in $\mathcal{C}(\Sigma_{0,2n})$ with the path metric. 
Bachman and Schleimer~\cite{0} defined the $\emph{Hempel distance}$ (or just the $\emph{distance}$) of $(T_{1},T_{2};S^2)$ is defined by 

\begin{center}
$d(T_{1},T_{2}):=\min\{d([\partial D_{1}],[\partial D_{2}])| ~D_{i}$ is an essential disk of $T_{i}$ for $i=1,2\}$.
\end{center}

We note that the distance $d(T_{1},T_{2})$ is a finite non-negative integer since the curve complex is connected.\\

 Now, consider a disk $F^k_{i}$ in the ball $B_i$ for $1\leq k\leq n$ so that $(F^k_{i})^\circ\subset B_i^\circ-\tau_i$ and $\partial F^k_{i}=\alpha_{ik}\cup \beta_k$, where $\beta_k$ is a simple arc between the two endpoints of $\alpha_{ik}$ in $\partial B_i$. The disk $F^k_{i}$ is called a \textit{bridge disk}. Then let $\{F^1_{i}, F^2_{i},...,F^n_{i}\}$ be \textit{ a collection of bridge disks} for $T_i$ if $F^k_{i}$ are pairwise disjoint. We note that there exist such disks  since $T_i$ are  rational $3$-tangles.  Let $\{D_{11},D_{12},...,D_{1~2n-3}\}$ be a maximal collection  of pairwise disjoint, non-isotopic essential disks in $B^3-\tau_1$. This is called a $\emph{collection of cut disks}$. Similarly,  we have a collection of cut disks $\{D_{21},D_{22},...,D_{2~2n-3}\}$  for $T_2$.
 We note that there exists a collection of cut disks  $\{D_{i1},D_{i2}...,$ $D_{i~2n-3}\}$ for $T_i$ so that $ (\cup_{j=1}^{2n-3}D_{ij})\cap (\cup_{j=1}^n F^j_{i})=\emptyset$ for $i=1,2$.
 Suppose $L$ is in $n$-bridge position with respect to $S^2$ for $n\geq 3$. If  there exist essential disks $D_1$ and $D_2$ of $T_1$ and $T_2$ respectively such that $D_i$ are cut disks and $[\partial D_1]=[\partial D_2]$, then $L$ is separated by the sphere into an $m$-bridge sublink and an $(n-m)$-bridge  sublink of $L$. We note that $0<m<n$ since $D_i$ is an essential disk in $B^3-\tau_i$. So, if $L$ is a knot then there are no such disks $D_1$ and $D_2$.  
Let $K$ be a knot which is in $n$-bridge position with respect to a sphere $P$.
Suppose there is a pair of bridge disks $E_{1} \subset B_1$ and $E_{2} \subset B_2$ so that the arcs $E_{1} \cap P$ and
$E_{2} \cap P$ intersect precisely at one end. Then $K$ is said to be $\emph{perturbed}$ with respect to $P$ (and
vice verse), and $E_{1},~ E_{2}$ are called $\emph{cancelling disks}$ for $K.$
We note that if there are cancelling disks for $K$ then we can construct collections of cut disks so that $d(T_1,T_2)=1$ as in the proof of Lemma~\ref{T11}.

\begin{Lem}[Ozawa, Takao~\cite{8}]\label{T11}
Suppose that  a knot $K$ ($n>2$) in $n$-bridge position has Hempel distance greater than equal to $2$. Then $K$ is not perturbed with respect to the bridge sphere $S^2$. 
\end{Lem}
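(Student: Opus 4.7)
The plan is to prove the contrapositive: assuming $K$ is perturbed with cancelling disks $E_{B_1}\subset B_1$ and $E_{B_2}\subset B_2$, I will exhibit essential disks $D_1$ of $T_1$ and $D_2$ of $T_2$ whose boundary classes lie at distance at most $1$ in $\mathcal{C}(\Sigma_{0,2n})$, contradicting $d(T_1,T_2)\geq 2$. Let $\alpha_{1k}$ (resp.\ $\alpha_{2l}$) be the knot arc of $E_{B_1}$ (resp.\ $E_{B_2}$), and let $p$ be the unique common endpoint of the sphere arcs $\beta_i = E_{B_i}\cap S^2$. Write $\alpha_{1k}$ with endpoints $\{p,q_1\}$ and $\alpha_{2l}$ with endpoints $\{p,q_2\}$; the cancelling condition forces $q_1\neq q_2$. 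Mimicking the $C_k = \partial N(\beta_k)$ construction from Section~2, I would push $E_{B_1}$ off $\alpha_{1k}$ to obtain an essential disk $D_1\subset B_1\setminus\tau_1$ whose boundary encircles exactly the punctures $\{p,q_1\}$; this is essential since the other side holds $2n-2\geq 4$ punctures.

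Next I would produce $D_2$ in $T_2$ with $\partial D_2$ disjoint from $\partial D_1$. Let $\alpha_{2m}$ be the arc of $\tau_2$ incident to $q_1$, and $r_1$ its other endpoint. Since each puncture of $S^2$ lies on exactly one arc of $\tau_2$ and $\{p,q_2\}$ is already occupied by $\alpha_{2l}$, a direct check gives $r_1\notin\{p,q_1,q_2\}$, so $p,q_1,q_2,r_1$ are four distinct punctures. Because $n\geq 3$, at least $n-2\geq 1$ arcs of $\tau_2$ remain, so there is an arc $\alpha_{2j}$ with endpoints $\{s_1,s_2\}$ disjoint from $\{p,q_1,q_2,r_1\}$. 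Taking $D_2$ to be the pushoff of a bridge disk for $\alpha_{2j}$, its boundary encircles $\{s_1,s_2\}$. Since $\{p,q_1\}$ and $\{s_1,s_2\}$ sit inside disjoint disks of $S^2$, I can realize $\partial D_1$ and $\partial D_2$ as disjoint essential simple closed curves on $\Sigma_{0,2n}$, and since they encircle different punctures they are non-isotopic. Hence $d([\partial D_1],[\partial D_2])=1$, contradicting the hypothesis.

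The essential use of the hypothesis $n>2$ is in securing the ``third'' pair $\{s_1,s_2\}$: one needs enough arcs in $\tau_2$ to bypass both the cancelling arc $\alpha_{2l}$ and the arc $\alpha_{2m}$ through $q_1$, and this fails precisely at $n=2$. A subsidiary step to verify is that the pushoff construction indeed yields an essential disk in the complementary ball; this rests on $\tau_i$ being a rational $n$-tangle (so the relevant bridge disks exist and can be chosen pairwise disjoint, as in Section~2) and on $n\geq 3$ ensuring each candidate boundary curve has at least two punctures on each side.
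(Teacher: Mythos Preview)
The paper does not give its own proof of this lemma; it merely cites Ozawa--Takao. (The sentence just before the lemma, asserting that cancelling disks yield $d(T_1,T_2)=0$, is in fact imprecise for knots---as the paper itself uses in the proof of Proposition~\ref{T3}, a knot can never have $d=0$---so the correct target is $d\le 1$, which is exactly what you aim for.) Your contrapositive strategy and your construction of $D_1$ as the frontier of $N(E_{B_1})$ are the standard ones.

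The genuine gap is in producing $D_2$. You take $D_2$ to be the pushoff of ``a bridge disk'' for $\alpha_{2j}$ and then assert that $\partial D_1$ and $\partial D_2$ can be made disjoint ``since $\{p,q_1\}$ and $\{s_1,s_2\}$ sit inside disjoint disks of $S^2$''. But the isotopy class of a simple closed curve in $\Sigma_{0,2n}$ bounding a twice--punctured disk is \emph{not} determined by which two punctures it encloses, and two such curves enclosing disjoint pairs can have positive geometric intersection number. Here $\partial D_1=\partial N(\beta_1)$ is a fixed curve, while $\partial D_2=\partial N(\beta_{2j})$ depends on the choice of bridge disk for $\alpha_{2j}$; and not every arc from $s_1$ to $s_2$ in $P$ arises as $E\cap P$ for a bridge disk $E$ of $\alpha_{2j}$ (one needs $\alpha_{2j}\cup\beta$ to bound a disk in $B_2$ missing the other strands of $\tau_2$). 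What must actually be argued is that \emph{some} bridge disk for $\alpha_{2j}$ has its $\beta$-arc disjoint from $N(\beta_1)$. One route: extend $E_{B_2}$ to a complete bridge-disk system for $T_2$, and then use an outermost-arc\,/\,disk-slide argument (exploiting that $\beta_1$ meets $\beta_2$ only at the single endpoint $p$) to push the $\beta$-arc of the $\alpha_{2j}$-disk off $\beta_1$. Some such step is needed; as written, the disjointness of $\partial D_1$ and $\partial D_2$ is not justified.
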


\section{Rectangle condition on the $n$-bridge sphere for a $n$-bridge decomposition of a knot}\label{B3}

 A $\emph{n-tangle}$ is the disjoint union of $n$ properly embedded arcs in the unit 3-ball; the embedding must send the endpoints of the arcs to $2n$ marked (fixed) points on the ball's boundary. Without loss of generality, consider the marked points on the 3-ball boundary to lie on a great circle $C$ (or a horizontal line $L$). The tangle can be arranged to be in general position with respect to the projection onto the flat disk or the upper plane in the $xy$-plane bounded by $C$ (or $L$). The projection then gives us a $\emph{tangle diagram}$ $TD$, where we make note of over and undercrossings as with knot diagrams.
Recall that a $n$-tangle $\alpha_1\cup\alpha_2\cup\cdot\cdot\cdot\cup\alpha_n$ in a 3-ball $B^3$, denoted by $T:=(B^3,\alpha=\alpha_1\cup\alpha_2\cup\cdot\cdot\cdot\cup\alpha_n)$, is \textit{rational}   if
 there exists a homeomorphism of pairs
 ${H}: (B^3,\alpha_1\cup\alpha_2 \cup\cdot\cdot\cdot\cup\alpha_n)\longrightarrow
 (D^2\times I,\{p_1,p_2,... ,p_n\}\times I)$, where $I=[0,1]$. \\

Now, let  $K$ be a knot which has a $n$-bridge  decomposition $(T_1,T_2,P)$.
Then, let $\{D_{11},D_{12},...,$ $D_{1~2n-3}\}$ and $\{D_{21},D_{22},...,D_{2~2n-3}\}$ be maximal collections  of  cut disks   for $T_1$ and $T_2$ respectively.
Recall that there exists a collection of bridge disks  $\{F^1_{i},...,F^n_{i}\}$ for $T_i$ so that $(\cup_{j=1}^n F^j_{i})\cap (\cup_{j=1}^{2n-3}D_{ij})=\emptyset$ for $i=1,2$.
For instance, $\{E_1,E_2,E_3\}$ is a maximal collection of essential cut disks for the trivial rational 3-tangle $\epsilon$ as in Figure~\ref{A1}, where trivial means that the projection of the tangle on $xy$-plane has no crossing.
Consider a cut disk $D$ in $T_i$ which may intersect with $\cup_{p=1}^{2n-3}D_{ip}$.
We need to assume that $D$ intersects $\cup_{p=1}^{2n-3}D_{ip}$ transversely and minimally. A subarc $\alpha(D)$ of $\partial D$ cut by $\cup_{p=1}^{2n-3}D_{ip}$ is a \emph{wave}  for the cut disk $D$ in $T_i$   if
there exists an outermost arc $\beta$ in $D\cap(\cup_{p=1}^{2n-3}D_{ip})$ and a corresponding outermost disk $\Delta$ of $D$ with $\partial \Delta=\alpha(D)\cup \beta$.

\begin{figure}[htb]
	\includegraphics[scale=.4]{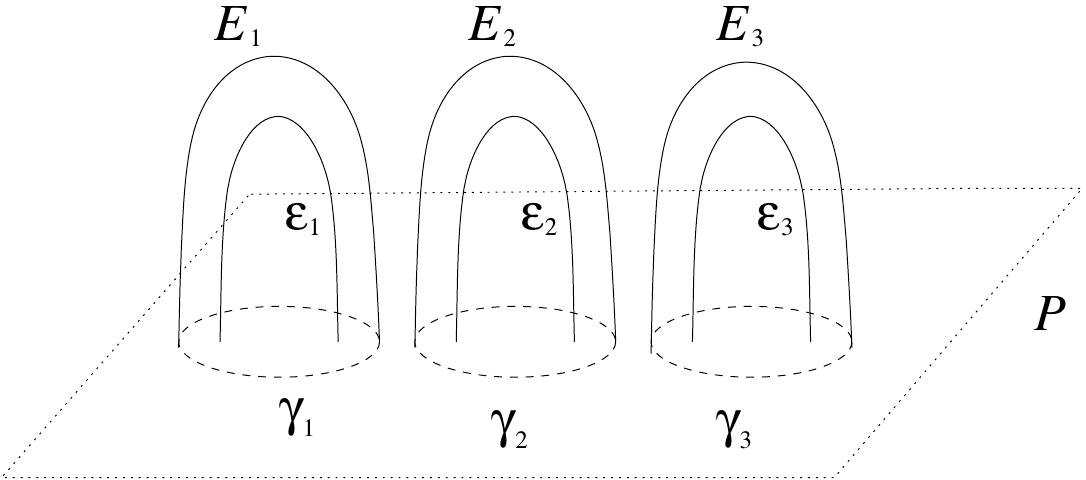}
	\caption{}
	\label{A1}
\end{figure}

\begin{Lem}\label{T2}
Suppose that $\{D_1,...,D_{2n-3}\}$ is a maximal collection of cut disks for a rational tangle $\tau$ and $D$ is an essential disk in $T$. Then if $D$ is not isotopic to any $D_i$ for $i=1,2,...,2n-3$ then $\partial D$ contains a wave. 

\end{Lem}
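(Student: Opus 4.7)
The plan is to put $D$ in minimal transverse position with $\bigcup_{p=1}^{2n-3} D_p$ and to read off the conclusion from the combinatorics of the intersection. First I would eliminate any circle components of $D \cap (\bigcup_p D_p)$: the complement $B^3\setminus N(\tau)$ is a genus-$n$ handlebody, hence irreducible, so an innermost circle on $D$ bounds a subdisk $D' \subset D$ which, together with an appropriate subdisk of some $D_p$, bounds a ball; isotoping $D$ across this ball reduces the intersection number, contradicting minimality. Hence after minimization $D \cap (\bigcup_p D_p)$ is a collection of arcs properly embedded in $D$, with endpoints on $\partial D$.

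Next I would split into two cases. Case 1: $D \cap (\bigcup_p D_p) = \emptyset$. The boundaries $\{\partial D_1, \ldots, \partial D_{2n-3}\}$ are $2n-3$ pairwise disjoint, pairwise non-isotopic essential simple closed curves on $\Sigma_{0,2n}$, which is exactly the number of curves in a pants decomposition of $\Sigma_{0,2n}$, so they form one. Hence $\partial D$ sits in a single pair of pants and, being essential, is isotopic there to one of its boundary components. Since $\partial D$ is essential on $\Sigma_{0,2n}$, this boundary cannot be puncture-parallel and so must be some $\partial D_p$. The standard fact that an essential disk in a handlebody is determined up to isotopy by its boundary, applied to $B^3\setminus N(\tau)$, then yields that $D$ is isotopic to $D_p$, contradicting the hypothesis.

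Case 2: $D \cap (\bigcup_p D_p)$ contains at least one arc. Choose an outermost arc $\beta$ with corresponding outermost subdisk $\Delta \subset D$, and write $\partial\Delta = \alpha \cup \beta$ with $\alpha \subset \partial D$. No other arc of the intersection can have an endpoint in the interior of $\alpha$, for such an endpoint would force that arc into the interior of $\Delta$, contradicting outermostness; hence $\alpha$ is a subarc of $\partial D$ cut by $\bigcup_p \partial D_p$, and $(\alpha,\beta,\Delta)$ realize the wave condition. I expect the main obstacle to be Case 1, which combines two nontrivial ingredients: the identification of $\{\partial D_p\}$ as a pants decomposition of $\Sigma_{0,2n}$, and the handlebody disk-uniqueness lemma applied to $B^3\setminus N(\tau)$; once these are granted, the rest is a standard innermost-circle/outermost-arc argument.
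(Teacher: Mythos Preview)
Your proof is correct, and for the nonempty-intersection case it coincides with the paper's: both take an outermost arc and read off the wave directly. The only real difference is in how you handle the disjoint case $D\cap(\cup_p D_p)=\emptyset$. You invoke two global facts---that $\{\partial D_p\}$ is a pants decomposition of $\Sigma_{0,2n}$, and that an essential disk in a handlebody is determined up to isotopy by its boundary---to conclude that $\partial D$ must be parallel to some $\partial D_p$ and hence $D$ isotopic to $D_p$. The paper instead argues locally: it places $D$ inside one of the complementary sub-balls $H_j$ (which contains a single arc $a_j$ of $\tau$) and does a case analysis on whether $\partial D$ separates the two endpoints of $a_j$ on $\partial H_j$, using irreducibility directly to rule out the remaining possibility. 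Your route is cleaner and packages the topology into two standard lemmas; the paper's route is more hands-on and avoids quoting the disk-uniqueness fact, at the cost of a slightly ad hoc case split. Either way the conclusion $D\cap(\cup_p D_p)\neq\emptyset$ follows, and from there the outermost-arc step is identical.
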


\begin{proof}
 If $D$ is not isotopic to any $D_j$ and $D\cap(\cup_{i=1}^{2n-3}D_i)=\emptyset$ then $D$ is not essential since $\{D_1,...,D_{2n-3}\}$ is a maximal collection of cut disks. Therefore, $D\cap(\cup_{i=1}^{2n-3}D_i)\neq\emptyset$. Then by taking an outermost arc of them we have a wave $\alpha(D)$ since $D$ intersects $\cup_{i=1}^{2n-3}D_i$ minimally.

\end{proof}

Suppose that $\{D_1,...,D_{2n-3}\}$ be a maximal collection of cut disks for a rational tangle $\tau$. A cut disk $D$ is \textit{essential} if $D$ cuts $\Sigma_{0,2n}$ into a $2$-punctured disk and $(2n-2)$-punctured disk. Then there is a maximal collection of essential cut disks $\mathcal{D}=\{D_1,...,D_n\}\subset\{D_1,D_2,...,$ $D_{2n-3}\}$. Then they cut $\Sigma_{0,2n}$ into $n$   2-punctured disks and a planer surface $S_{\mathcal{D}}$ with $n$ boundary components. We may assume that the collection of essential cut disks $\{D_1,...,D_n\}$ are the first $n$ elements of the maximal collection of cut disks $\{D_1,...,D_n,D_{n+1},...,D_{2n-3}\}$. Then $S_{\mathcal{D}}$ has a pants decomposition with $n-2$ pairs of pants. Suppose that $P$ and $Q$ are pairs of pants of the pants decompostions of $S_{\mathcal{D}}$ and $S_{\mathcal{D}'}$ respectively, where $\mathcal{D}'$ is a maximal collection of essential cut disks for a rational $n$-tangle $\tau'$. We assume that $\partial P$ and $\partial Q$ intersect transversely and minimally. Then we say that the pairs of pants $P$ and $Q$ are \textit{tight} if for each $4$-tuple of nine combinations as below there is a rectangle $R$ embedded in $P$ and $Q$ such that the interior of $R$ is disjoint from $\partial P\cup \partial Q$ and the four edges of $\partial R$ are subarcs of the $4$-tuple, where $\{a_1,b_1,c_1\}$ and $\{a_2,b_2,c_2\}$ are the three boundary components of $P$ and $Q$ respectively.\\

$(a_1,b_1,a_2,b_2)$ \hskip 20pt$(a_1,b_1,a_2,c_2)$ \hskip 20pt$(a_1,b_1,b_2,c_2)$ \hskip 20pt $(a_1,c_1,a_2,b_2)$ \hskip 20pt$(a_1,c_1,a_2,c_2)$\\

 $(a_1,c_1,b_2,c_2)$ \hskip 20pt $(b_1,c_1,a_2,b_2)$ \hskip 20pt$(b_1,c_1,a_2,c_2)$ \hskip 20pt$(b_1,c_1,b_2,c_2)$ \\

Now, I would like to define the rectangle condition for bridge spheres with an analogous definition to the rectangle condition for Heegaard surfaces. For two pants decompositions $\cup_{i=1}^{n-2} P_i$ and $\cup_{j=1}^{n-2} Q_j$ of  $S_{\mathcal{D}_1}$ and $S_{\mathcal{D}_2}$ for $T_1$ and $T_2$ respectively.
Then we say that $\cup_{i=1}^{n-2} P_i$ and $\cup_{j=1}^{n-2} Q_j$   satisfy the \emph{rectangle condition} if all the non-essential pairs of pants $P_i$ and $Q_j$ are tight for $1\leq i,j\leq n-2$. Then the following proposition is the main result about the rectangle condition.

 \begin{Prop}\label{T3}
 Suppose $\cup_{i=1}^{n-2} P_i$ and $\cup_{j=1}^{n-2} Q_j$ are two pants decompositions of $S_{\mathcal{D}_1}$ and $S_{\mathcal{D}_2}$ for $T_1$ and $T_2$ respectively. If $\cup_{i=1}^{n-2} P_i$ and $\cup_{j=1}^{n-2} Q_j$ satisfy the rectangle condition, then the Hempel distance $d(T_1,T_2)\geq 2$.
 \end{Prop}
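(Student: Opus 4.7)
The plan is to argue by contradiction. Suppose $d(T_1,T_2) \leq 1$, so there exist essential disks $D_1$ of $T_1$ and $D_2$ of $T_2$ whose boundaries are either isotopic in $\Sigma_{0,2n}$ or can be realized disjointly on $\Sigma_{0,2n}$. I will derive a violation of the rectangle condition from this situation.

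First I analyze each $\partial D_i$ with respect to its own pants decomposition. After isotoping $\partial D_i$ to meet the pants curves $\cup_{p=1}^{2n-3}\partial D_{ip}$ transversely and minimally, Lemma~\ref{T2} gives two alternatives: either $\partial D_i$ is isotopic to one of the curves $\partial D_{ip}$, or $\partial D_i$ contains a wave, i.e., an arc in some pair of pants with both endpoints on a single boundary component. Minimality of intersection forces such a wave to be essential in that pair of pants, and therefore in a non-essential pair of pants with boundaries $\{a,b,c\}$ any wave with endpoints on $a$ separates $b$ from $c$.

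Next I select non-essential pants $P$ of the $T_1$-decomposition and $Q$ of the $T_2$-decomposition, together with arcs $\alpha_1 \subset \partial D_1 \cap P$ and $\alpha_2 \subset \partial D_2 \cap Q$ (taken to be whole boundary components of $P$ or $Q$ when $\partial D_i$ is isotopic to a pants curve). By the separation analysis above, I may label boundaries so that $\alpha_1$ separates $b_1$ from $c_1$ in $P$ and $\alpha_2$ separates $b_2$ from $c_2$ in $Q$. Applying the rectangle condition to the 4-tuple $(b_1,c_1,b_2,c_2)$ yields a rectangle $R \subset P \cap Q$ whose four edges lie alternately on $b_1, b_2, c_1, c_2$. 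Viewed inside $P$, the two opposite edges of $R$ lie on $b_1$ and $c_1$, so $R$ provides a path in $P$ between these two boundaries; because $\alpha_1$ separates $b_1$ from $c_1$ inside $P$, that path must be crossed by $\alpha_1$, so $\alpha_1$ traverses $R$ from its $b_2$-edge to its $c_2$-edge. Symmetrically $\alpha_2$ traverses $R$ from its $b_1$-edge to its $c_1$-edge. Two properly embedded arcs in a rectangle that connect opposite pairs of edges must intersect, so $\alpha_1 \cap \alpha_2 \neq \emptyset$, contradicting $\partial D_1 \cap \partial D_2 = \emptyset$. When one or both of $\partial D_1$, $\partial D_2$ is instead isotopic to a pants curve, I simply take $\alpha_i$ to be that curve and observe that a corner of the corresponding rectangle is a point of $\partial D_1 \cap \partial D_2$, giving the same contradiction.

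The main obstacle I anticipate is locating the wave $\alpha_i$ inside a \emph{non-essential} pair of pants: the outermost-disk argument in Lemma~\ref{T2} produces a wave somewhere in the pants decomposition but does not specify whether that pants is essential or not. To handle this I plan to use that an essential pair of pants is a twice-punctured disk, hence contains no essential simple closed curves; consequently $\partial D_i$ must enter some non-essential pants $P$, and a further outermost-arc argument applied within the arc system $\partial D_i \cap P$ will supply a wave of the required type based in $P$.
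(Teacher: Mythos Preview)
Your overall strategy matches the paper's: assume $d(T_1,T_2)\le 1$, obtain disjoint essential disks $D_1,D_2$, locate for each $D_i$ a non-essential pair of pants in which two boundary curves are ``separated'' by (a piece of) $\partial D_i$, and then use a rectangle for the resulting $4$-tuple to force $\partial D_1\cap\partial D_2\neq\emptyset$. Your rectangle-crossing argument in the final step is clean and correct.

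The genuine gap is exactly the one you flagged, and your proposed patch does not close it. Knowing that $\partial D_i$ enters some non-essential pair of pants $P$ does not by itself produce a wave of $\partial D_i$ inside $P$: the collection $\partial D_i\cap P$ may consist solely of arcs joining distinct boundary components of $P$, and no ``outermost'' argument on that arc system will manufacture a wave there. (Concretely, for $n=3$ one can draw $\partial D$ meeting the unique non-essential pants in three seam arcs, one between each pair of $\partial E_j$'s, with every wave of $\partial D$ sitting in an essential twice-punctured disk; your argument gives no mechanism to exclude this picture.) The outermost disk furnished by Lemma~\ref{T2} lives on one side of some cut disk $D_{1s}$, and that side may well be the essential pair of pants.

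The paper circumvents this by a three-dimensional separation step rather than by hunting for a wave inside a non-essential $P$. It takes the outermost bigon $C\subset D_1$ and argues that $C\cup D_{1s}$ separates the remaining cut disks in $B_1$ into two non-empty families (using essentiality of $D_1$); since the non-essential pants assemble into a connected planar surface, some non-essential $P_k$ must then have two of its boundary curves on opposite sides, yielding ``no path in $\Sigma_{0,2n}$ from $a_k$ to $b_k$ misses $\partial D_1\cup\partial D_{1s}$.'' Note the extra curve $\partial D_{1s}$: the separation is by $\partial D_1$ together with a single pants curve, not by $\partial D_1$ alone. The rectangle argument then has to produce a path avoiding both, which the paper does by taking the arc $\beta\subset\alpha(D_2)$ whose interior lies in a single $P$-pants component. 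Your symmetric version would need the same amendment once you import this separation step.
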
 
 
 \begin{proof}
 	Suppose $\cup_{i=1}^{n-2} P_i$ and $\cup_{j=1}^{n-2} Q_j$ are two pants decompositions of $S_{\mathcal{D}_1}$ and $S_{\mathcal{D}_2}$ for $T_1$ and $T_2$ respectively.
  Let $D_{i1},...,D_{i~2n-3}$ be the disjoint, non-isotopic disks in $B_i-\tau_i$ so that $\cup_{j=1}^{2n-3}\partial D_{ij}=\cup_{j=1}^{n-2}\partial P_j$. 
Suppose that $d(T_1,T_2)<2$. Then we have $d(T_1,T_2)=1$ since $K$ is a knot.
So, there exist essential disks $D_{1}$ and $D_{2}$ in $B^3-\tau_1$ and $B^3-\tau_2$ respectively so that $D_{1}\cap D_{2}=\emptyset$ and $[\partial D_1]\neq [\partial D_2]$.  We assume that  $D_i$ meets $\cup_{j=1}^{n-2}\partial P_j$  transversely and minimally if they intersect.\\

First, assume that $D_{1}\cap (\cup_{j=1}^{2n-3}D_{1j})=\emptyset$. Then we note that $\partial D_1$ is isotopic to a boundary of $\partial P_i$. Otherwise, we should have more than $n-2$ pairs of pants for $T_1$. Then, we isotope $\partial D_1$ so that $\partial D_1\subset P_k$ for some $k\in\{1,2,...,n-2\}$ without changing the condition of disjointness with $D_2$. Let $a_k$ be the boundary component of $P_k$ which is isotopic to $\partial D_1$ without loss of generality. Then we note that there is no path to connect $a_k$ and $b_k$, and $a_k$ and $c_k$ in $S_{\mathcal{D}}$ without meeting $\partial D_1$, where $b_k$ and $c_k$ are the other boundary components of $P_k$. Then there are two subcases for $\partial D_2$ as follows. \\

If $\partial D_2$ is isotopic to one of $\partial Q_j$ then $\partial D_2$ should meet $\partial D_1$ to satisfy the rectangle condition. This contradicts the condition that $D_1\cap D_2=\emptyset$.\\

If $\partial D_{2}$ is  isotopic to none of $\partial Q_j$  then $\partial D_{2}$ has a wave $\alpha(D_2)$ by Lemma~\ref{T2}. We note that the interior of  $\alpha(D_{2})$ does not intersect with $\cup_{j=1}^{n-2}\partial Q_j$ and the two endpoints of $\alpha(D_{2})$ are in the same component of $\partial Q_m$ for some $m\in\{1,2,...,n-2\}$. Let $a_m'$ be the boundary component of $Q_m$ which meets $\alpha(D_2)$.
We note that $\alpha(D_{2})\cup a_m'$ separates  $(\cup_{j=1}^{n-2}\partial Q_j)-a_m'$ into two non-empty  sets of boundary components since $D_{2}$ is an essential disk in $B_2-\tau_2$. Especially, there is a pair of pants $Q_l$ so that the boundary components of $Q_l$ are separated by $\alpha(D_{2})\cup a_m'$ since $\Sigma_{0,2n}$ is connected. Let $a_l'$ and $b_l'$ be two components of $Q_l$ which are separated by $\alpha(D_{2})\cup a_m'$.
So, $\alpha(D_2)$ needs to meet the all $\partial P_i$ for $i=1,2,...,n-2$ and especially there are two points $x,y$ in the interior of $\alpha(D_2)$ so that $x\in a_k$, $y\in b_k$ and  the arc $\beta$ between $x$ and $y$ in $\alpha(D_2)$ only meets  $\cup_{i=1}^{n-2}\partial P_i$ at $x$ and $y$ by the rectangle condition. Since $\partial D_1\cap \alpha(D_2)=\emptyset$, we can take a path $\beta'$ which is closely parallel to $\beta$ so that $\beta'$ connects $a_k$ and $b_k$  in $\Sigma_{0,2n}$ without meeting $\partial D_{1}$. This makes a contradiction.\\

Now, assume that $D_{1}\cap (\cup_{j=1}^{2n-3}D_{1j})\neq \emptyset$.   Take a component $C$ of $D_1-(\cup_{j=1}^{2n-3}D_{1j})$ whose closure is a bigon. Then assume that $C\cap D_{1s}\neq \emptyset$ for $s\in\{1,2,...,2n-3\}$. Then $C\cup D_{1s}$ separates $\{D_{11},...,D_{1s-1},D_{1s+1},...,D_{1~2n-3}\}$ into two non-empty sets since $D_1$ is an essential disk in $B_1-\tau_1$. 
So, there exists $k\in\{1,2,...,2n-2\}$ so that the boundary components of $P_k$ are separated by $C\cup D_{1s}$. Let $a_k$ and $b_k$ be the components of $P_k$ so that they are separated by $C\cup D_{1s}$. Then there is no path to connect $a_k$ and $b_k$ in $\Sigma_{0,2n}$ without meeting $\partial D_{1}$ or $\partial D_{1s}$. We note that $\partial D_{1s}$ is isotopic to one of the boundary components $\cup_{i=1}^{n-2}\partial P_i$.\\

 If $\partial D_{2}$ is isotopic to one of $\partial Q_j$ we switch the indices $1$ and $2$ to have a contradiction with the same reason as above.\\
 
 If $\partial D_2$ is isotopic to none of $\partial Q_j$ then $\partial D_2$ has a wave $\alpha(D_2)$.
 Then, by using a similar argument above, there are two points $x,y$ in the interior of $\alpha(D_2)$ so that $x\in a_k$, $y\in b_k$ and  the arc $\beta$ between $x$ and $y$ in $\alpha(D_2)$ only meets $\cup_{i=1}^{n-2}\partial P_i$ at $x$ and $y$ by the rectangle condition. Since $\partial D_1\cap \alpha(D_2)=\emptyset$, we can take a path $\beta'$ which is closely parallel to $\beta$ so that $\beta'$ connects $a_k$ and $b_k$  in $\Sigma_{0,2n}$ without meeting $\partial D_{1}$ or $\partial D_{1s}$. This contradicts the existence of $a_k$ and $b_k$ so that there is no path to connect $a_k$ and $b_k$ in $\Sigma_{0,2n}$ without meeting $\partial D_{1}$ or $\partial D_{1s}$. This completes the proof.
 
 \end{proof}

 Now, we will discuss how to check whether or not given two pants decompositions satisfy the rectangle condition. First of all, we parameterize the boundary of an essential disk in $B^3-\tau_i$ in Section~\ref{B4}.
 
\section{Dehn's parameterization of $\Sigma_{0,6}$} \label{B4}

Let $\gamma$ be a simple closed curve in $\Sigma_{0,6}$. We consider the standard essential disks $E_1,E_2,E_3$ as in Figure~\ref{A1}.
Consider the pair of pants $I:=\Sigma_{0,6}-\{E_1'\cup E_2'\cup E_3'\}$, where $E_i'$ is the two punctured disk in $\Sigma_{0,6}$ so that $\partial E_i'=\partial E_i$ as in  Figure~\ref{A2}. 

\begin{figure}[htb]
	\includegraphics[scale=.4]{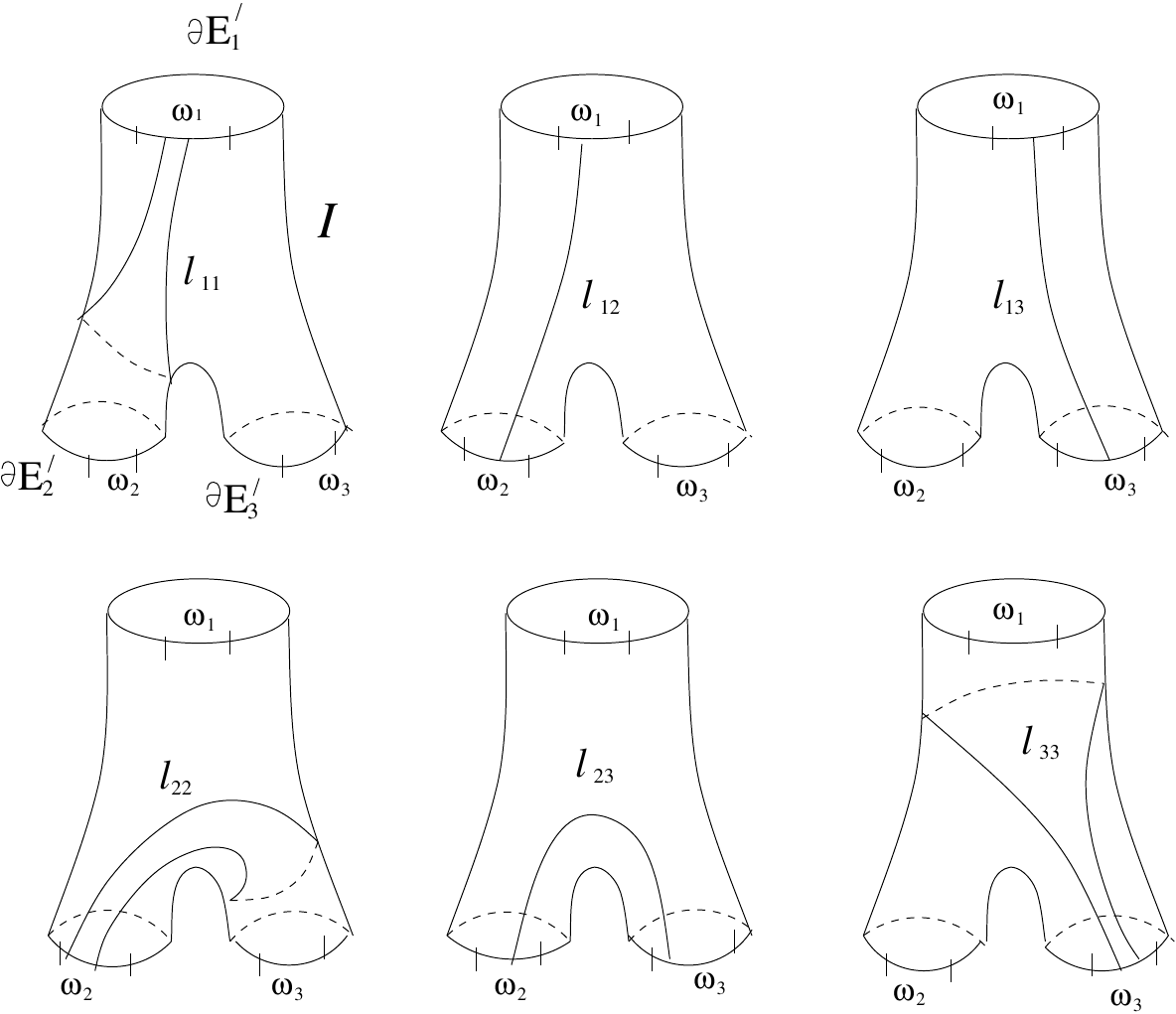}
	\caption{}
	\label{A2}
\end{figure}

Figure~\ref{A2}  shows $\emph{standard arcs}$ $l_{ij}$ in the pair of pants $I$. We notice that we can isotope $\gamma$ into $\delta$ in $\Sigma_{0,6}$ so that each component of $\delta\cap I$ is isotopic to one of the standard arcs and $\delta\cap \partial E_i\subset \omega_i$. 
Then we say that subarc $\alpha$ of $\delta$ is $\emph{carried by}$ $l_{ij}$ if some component of $\alpha\cap I$ is isotopic to $l_{ij}$. The closed arc $\omega_i\subset \partial E_i$ is called a $\emph{window}$. 
 Let $I_i=|\delta\cap \omega_i|$. 
Then $\delta$ can have many parallel arcs which are the same type in $I$. Let $x_{ij}$ be the number of parallel arcs of the type $l_{ij}$ which is called the $weight$ of $l_{ij}.$ 

\begin{Lem}\label{T4}
	
	$I_i$  determine the weights $x_{jk}$ for $i,j,k\in\{1,2,3\}$.
\end{Lem}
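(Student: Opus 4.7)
The plan is to set up a counting identity expressing each $I_i$ as a linear combination of the weights $x_{jk}$, and then to invert the resulting small linear system. The key realization is that in the pair of pants $I$, arc types are rigid enough that the three window-intersection numbers pin them down uniquely.

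First I would note that, up to isotopy, each essential component of $\delta\cap I$ is a standard arc $l_{ij}$ in the pair of pants. Assuming for the moment that only the three ``seam'' arcs $l_{12},l_{13},l_{23}$ (with endpoints on two distinct windows) occur, each seam $l_{ij}$ contributes exactly one endpoint to $\omega_i$ and one to $\omega_j$. Summing over all components of $\delta\cap I$ then yields
\[
I_1=x_{12}+x_{13},\qquad I_2=x_{12}+x_{23},\qquad I_3=x_{13}+x_{23}.
\]
This is a $3\times 3$ linear system in $(x_{12},x_{13},x_{23})$ whose coefficient matrix has determinant $2\neq 0$, so it is uniquely solvable. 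Adding and subtracting in pairs gives the explicit formulas
\[
x_{12}=\tfrac12(I_1+I_2-I_3),\qquad x_{13}=\tfrac12(I_1+I_3-I_2),\qquad x_{23}=\tfrac12(I_2+I_3-I_1),
\]
so the $I_i$ determine the weights.

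The main obstacle, and the step I expect to spend the most care on, is the preliminary reduction that rules out ``return'' arcs with both endpoints on the same window $\omega_i$. Such a return arc, being essential in $I$, would cobound with a subarc of $\omega_i$ an annulus enclosing one of the other two windows. I would use the hypothesis that $\delta$ is a simple closed curve with $\delta\cap\partial E_j'\subset\omega_j$ and that its intersection with $\partial E_1'\cup\partial E_2'\cup\partial E_3'$ is minimal (bigon-free) to argue that the presence of such a return would either create a removable bigon or force an arc configuration incompatible with disjointness of $\delta$ from itself. Once this combinatorial step is in place, the linear-algebra calculation above finishes the proof; the formulas are moreover consistent with the fact that the $x_{ij}$ are non-negative integers, which translates to the triangle-type inequalities $I_i\le I_j+I_k$ satisfied automatically whenever $\delta$ actually exists in this configuration.
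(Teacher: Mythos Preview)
Your reduction step—ruling out ``return'' arcs $l_{ii}$ with both endpoints on the same window—does not go through, and this is where the argument breaks. Such return arcs genuinely occur. For a concrete example, take $\delta$ to be an essential simple closed curve in $\Sigma_{0,6}$ separating the punctures $\{1,2,3\}$ from $\{4,5,6\}$ (with $\{2i-1,2i\}\subset E_i'$). In minimal position $\delta$ is disjoint from $\partial E_1'$ and $\partial E_3'$ and meets $\partial E_2'$ exactly twice, so $(I_1,I_2,I_3)=(0,2,0)$ and the unique component of $\delta\cap I$ is a return arc of type $l_{22}$; there is no bigon to remove and no self-intersection forced. More generally, whenever $I_i>I_j+I_k$ the pigeonhole on endpoints forces $x_{ii}>0$, and your formula $x_{jk}=\tfrac12(I_j+I_k-I_i)$ would return a negative number. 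So the claim that the triangle inequalities ``are satisfied automatically whenever $\delta$ actually exists'' is simply false.

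The paper does not try to exclude this case; it splits into two regimes according to whether all triangle inequalities $I_i\le I_j+I_k$ hold. When they do, a short contradiction shows $x_{11}=x_{22}=x_{33}=0$ and your linear system applies verbatim. When instead $I_i\ge I_j+I_k$ for some $i$, one uses the relations $I_i=2x_{ii}+x_{ij}+x_{ik}$, $I_j=x_{ij}$, $I_k=x_{ik}$ (the other return weights $x_{jj},x_{kk}$ vanish since distinct return-arc types cannot be realized disjointly), and again solves uniquely: $x_{ij}=I_j$, $x_{ik}=I_k$, $x_{ii}=\tfrac12(I_i-I_j-I_k)$. Your linear-algebra portion is correct for the first regime; what is missing is the second regime, together with the observation that the values of the $I_i$ themselves determine which regime you are in.
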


\begin{proof} We have two subcases for this.
	First, suppose that $I_i\leq I_j+I_k$ for all distinct $i,j,k\in\{1,2,3\}$. 
	We claim that $x_{11}=x_{22}=x_{33}=0$. If not, then $x_{ii}>0$ for some $i$.
	We notice that $x_{jj}=x_{kk}=0$. So we have $I_i=2x_{ii}+x_{ij}+x_{jk}$, $I_j=x_{ij}$ and $I_k=x_{ik}$.
	This shows that $2x_{ii}+x_{ij}+x_{jk}\leq x_{ij}+x_{ik}$. This makes a contradiction. So, $x_{11}=x_{22}=x_{33}=0$.
	Now, we have $I_i=x_{ij}+x_{ik}$. This implies that
	$x_{ij}={{I_i+I_j-I_k}\over 2}$. \\
	
	Now, suppose that $I_i>I_j+I_k$ for some $i$.
	Then we note that $\gamma$ has $I_i=2x_{ii}+x_{ij}+x_{ik}$, $I_j=x_{ij}$ and $I_k=x_{ik}$.	This implies that $x_{ij}=I_j,~ x_{ik}=I_k$ and $x_{ii}={I_i-I_j-I_k\over 2}$. 
	
\end{proof}

\begin{figure}[htb]
	\includegraphics[scale=.25]{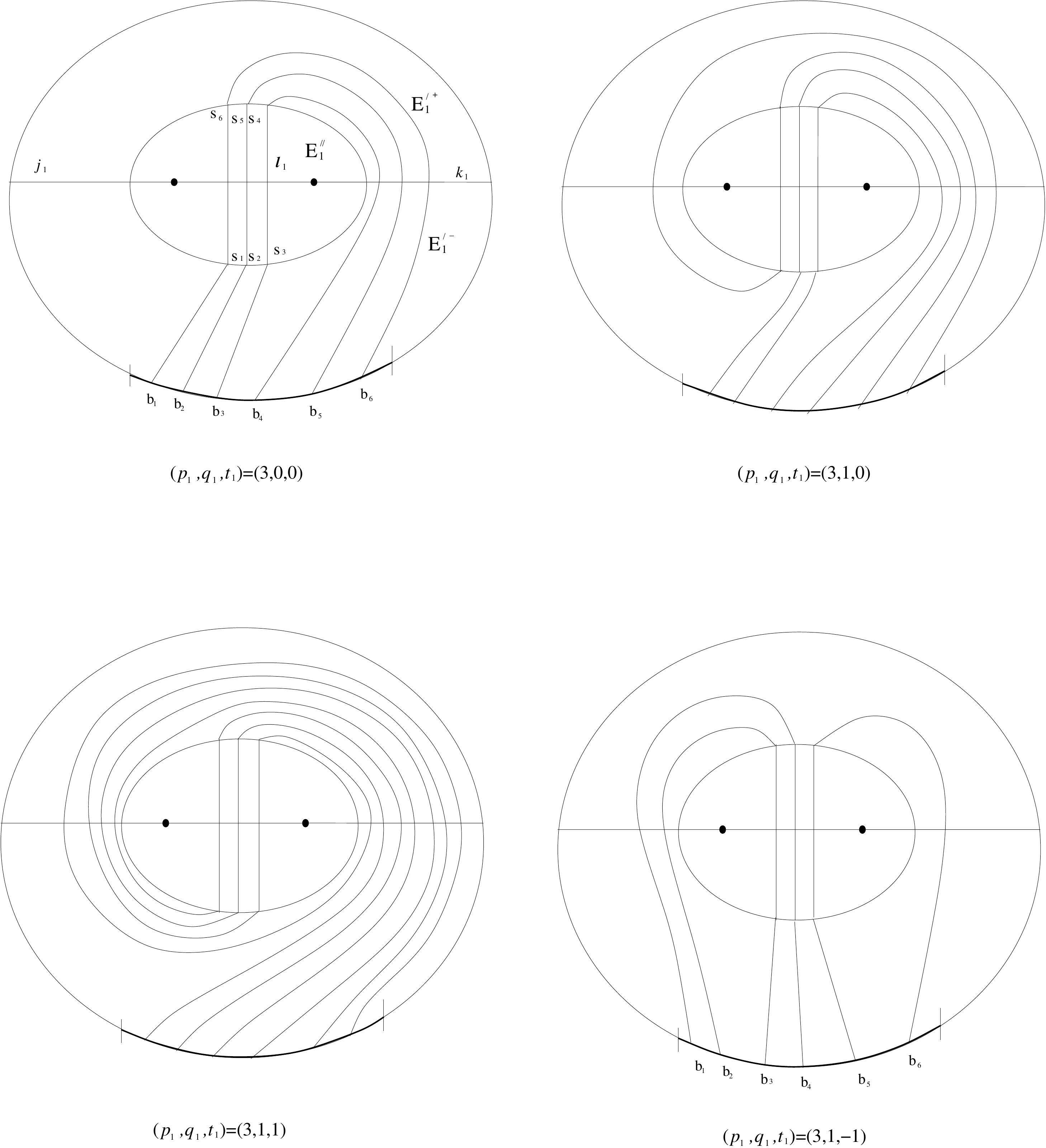}
	\caption{}
	\label{A3}
\end{figure}

Now, we discuss the  arc components of $\delta\cap E_i''$. Let $j_1$ and $k_1$ be the simple arcs as in Figure~\ref{A3}.
We assume that $\partial E_1'\cup\delta \cup j_1\cup k_1\cup l_1$ has no bigon in $\Sigma_{0,6}$. We note that $j_1\cup k_1\cup l_1$ separates $E_1'$ into two semi-disks ${E_1'}^+$ and ${E_1'}^-$ as in Figure~\ref{A3}.
Let ${u}_1^+$ be the number of subarcs of $\delta$ from $l_1$ to $j_1$ in  ${E_1'}^+$. Also, let ${v}_1^+$ be the number of subarcs of $\delta$ from $l_1$ to $k_1$ in ${E_1'}^+$ and let ${w}_1^+$ be the number of subarcs of $\delta$ from $j_1$ to $k_1$ in ${E_1'}^+$.  
Let $m_1=|\delta\cap j_1|$ and $n_1=|\delta\cap k_1|$ in $E_1'$. We note that each component of $\delta\cap E_1'$ meets $l_1$ exactly once. Also, we know that each such component is essential in 
$E_1'-\{1,2\}$. The components of $\delta\cap E_1'$ are determined by three parameters $p_1,q_1,t_1$ as in Figure~\ref{A3},
where $p_1=\min\{|\delta'\cap l_1||\delta'\sim \delta$ in $\Sigma_{0,6}\}$ and $q_1\in \mathbb{Z},~0\leq q_1<p_1$. 
In order to define $q_1$ and $t_1$, consider $m_1$ and $n_1$.
 Then we know that $u_1^++v_1^+=p_1$. 
So, $m_1-n_1=(u_1^++w_1^+)-(v_1^++w_1^+)=u_1^+-v_1^+$. Therefore, $-p_1=-u_1^+-v_1^+\leq u_1^+-v_1^+=m_1-n_1=u_1^+-v_1^+\leq u_1^++v_1^+=p_1$.
So, we know $-p_1\leq m_1-n_1\leq p_1$. Now, we define $q_1$ and $t_1$ as follows. If $n_1-m_1=p_1$ then $q_1\equiv m_1$ (mod $p_1$) and $0\leq q_1<p_1$, and $t_1={m_1-q_1\over p_1}$ and if $-p_1\leq n_1-m_1<p_1$ then $q_1\equiv -m_1$ (mod $p_1$) and $0\leq q_1<p_1$, and $t_1={-m_1-q_1\over p_1}$.
Then $t_1$ is called the $twisting$ $number$ in $E_1'$. 
Let $(p_1,q_1,t_1)$ be the three parameters to determine the arcs in $E_1'$. Similarly, we have the three parameters $(p_i,q_i,t_i)$ for $E_i'$ ($i=2,3$). 
Therefore, $\gamma$ is determined by a sequence of nine parameters $(p_1,q_1,t_1,p_2,q_2,t_2,p_3,q_3,t_3)$ by Lemma~\ref{T4}.\\

Let $\mathcal{C}$ be the set of isotopy classes of simple closed curves in $\Sigma_{0,6}$.
For a given simple closed curve $\delta$ in $\Sigma_{0,6}$, we define $p_i$, $q_i$ and $t_i$ in $E_i'$ as above. Let $q_i'=p_it_i+q_i$ for $i=1,2,3$. Then we have the following Dehn's Theorem.

\begin{Thm} [Special case of Dehn's Theorem ]\label{T5}

There is an one-to-one map $\phi:\mathcal{C}\rightarrow\mathbb{Z}^6$ so that $\phi(\delta)=(p_1,p_2,p_3,q_1',q_2',q_3')$. i.e., it classifies isotopy classes of simple closed curves.
 \end{Thm}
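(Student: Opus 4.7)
The plan is to establish injectivity of $\phi$ by reconstructing the isotopy class of $\delta$ from the six-tuple $(p_1,p_2,p_3,q_1',q_2',q_3')$. The argument naturally decomposes into three steps: first recover the nine-parameter description, then reconstruct the arc system of $\delta$ inside the pants $I$ and inside each twice-punctured disk $E_i'$, and finally glue these two partial pictures back together consistently.

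For the first step, I would show that the nine parameters $(p_i, q_i, t_i)_{i=1,2,3}$ can be read off from the six-tuple. Given $p_i$ and $q_i' = p_i t_i + q_i$ with $0 \le q_i < p_i$, the sign conventions introduced just before the theorem (two cases according to whether $n_i - m_i = p_i$ or $-p_i \le n_i - m_i < p_i$) pin down $q_i \equiv \pm q_i' \pmod{p_i}$ uniquely in $[0,p_i)$, whence $t_i = (q_i' - q_i)/p_i$. The degenerate case $p_i = 0$ is handled by the convention $q_i = t_i = 0$, in which $\delta$ has no arc components in $E_i'$. Next I would isotope $\delta$ so that it meets $\partial E_1 \cup \partial E_2 \cup \partial E_3$ transversely and minimally; then Lemma~\ref{T4} expresses each weight $x_{jk}$ explicitly as a linear function of $I_1, I_2, I_3$, and since $I_i = p_i$ the arc system $\delta \cap I$ is determined up to isotopy rel~$\partial I$ by $(p_1,p_2,p_3)$ alone. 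Inside each $E_i'$, the triple $(p_i, q_i, t_i)$ is the standard Dehn--Thurston coordinate for a simple multi-arc with $p_i$ strands crossing $l_i$: the integer $q_i$ specifies the rotational offset of the endpoints along $\partial E_i'$, while $t_i$ records how many times the arcs twist around the pair of punctures before exiting through the window $\omega_i$.

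Finally, I would reconstruct $\delta$ by gluing $\delta \cap I$ to the arcs $\delta \cap E_i'$ along each window $\omega_i$, using the cyclic matching dictated by $q_i$ and $t_i$. The main obstacle is to verify that this gluing yields a curve isotopic to the original $\delta$, i.e.\ that the chosen normalizations remove all residual ambiguity: the delicate point is that a Dehn twist supported in a collar of $\partial E_i$ shifts $t_i$ in a controlled way, and the constraint $0 \le q_i < p_i$ precisely eliminates that one-parameter family of alternatives. Once this matching is pinned down, injectivity of $\phi$ is immediate: any two simple closed curves with the same image have identical nine-tuples, identical arc systems on both sides of $\partial E_1 \cup \partial E_2 \cup \partial E_3$, and identical gluing data along the windows, hence represent the same element of $\mathcal{C}$.
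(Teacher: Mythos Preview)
The paper does not actually prove Theorem~\ref{T5}: it is stated without proof, followed only by a remark on the degenerate case $p_1=p_2=p_3=0$ and a reference to Penner--Harer~\cite{6} for the general Dehn theorem. So there is no in-paper argument to compare against; your proposal supplies what the paper outsources to the literature.

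Your outline is the standard reconstruction argument for Dehn--Thurston coordinates and is essentially correct, but two small points deserve tightening. First, the recovery of $(q_i,t_i)$ from $(p_i,q_i')$ is simpler than you make it: since $q_i'=p_it_i+q_i$ with $0\le q_i<p_i$, ordinary Euclidean division gives $q_i$ and $t_i$ uniquely, with no $\pm$ ambiguity to resolve---the sign dichotomy in the paper's definition concerns how $q_i'$ is read off from the geometric data $m_i,n_i$, not how $q_i$ is recovered from $q_i'$. Second, the identification $I_i=p_i$ is not quite right: each arc of $\delta\cap E_i'$ meets $l_i$ once but has two endpoints on $\partial E_i'$, so $I_i=2p_i$. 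This does not damage the argument, since Lemma~\ref{T4} still determines the weights $x_{jk}$ from $(I_1,I_2,I_3)$, hence from $(p_1,p_2,p_3)$. With these corrections your three-step reconstruction (pants piece via Lemma~\ref{T4}, twice-punctured disks via $(p_i,q_i,t_i)$, gluing along the windows) is exactly the content of the Dehn parametrization, and injectivity follows.
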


When $p_1=p_2=p_3=0$ then $t_i'=1$ if the simple closed curve is isotopic to $\partial E_i'$ and $t_j'=0$ if $j\neq i$.
Refer~\cite{9} to see the general Dehn's theorem.

 \section{Detecting $3$-bridge knots with $3$-bridge presentations}\label{B5}

 \begin{Thm}\label{T7} Let $(T_1,T_2;S^2)$ be the 3-bridge decomposition of a knot $K$. Suppose $\cup_{i=1}^{n-2} P_i$ and $\cup_{j=1}^{n-2} Q_j$ are two pants decompositions of $S_{\mathcal{D}_1}$ and $S_{\mathcal{D}_2}$ for $T_1$ and $T_2$ respectively.
 If $\cup_{i=1}^{n-2} P_i$ and $\cup_{j=1}^{n-2} Q_j$ satisfy the rectangle condition, then $K$ is a 3-bridge knot.
 \end{Thm}
 
 \begin{proof}
The proof is based on Lemma~\ref{T11} and Proposition~\ref{T3}.
 \end{proof}

To use the rectangle condition to check if a $3$-bridge presentation link $K$ is $3$-bridge link,  we need to choose a level sphere $S^2$ to have the $3$-bridge decomposition $(T_1,T_2;S^2)$ of $K$. Especially we can choose $S'^2$ to have a $3$-bridge decomposition  $(T_\epsilon,T'_2;S'^2)$ of $K$ by the following lemma, where $T_\epsilon=(B^3,\epsilon)$.  (Refer to Figure~\ref{A1}.) However, it does not mean that for a given pants decomposition for $T_{\epsilon}$ there exists a pants decomposition for $T_{2}'$ to satisfy the rectangle condition even if there are two pants decompositions for the two rational tangles $T_{\epsilon}$ and $T_{2}'$ to satisfy the rectangle condition.\\

 \begin{Lem}\label{T8}
 Suppose $(T_1,T_2;P)$ and $(T_\epsilon,T_2';P')$ are $3$-bridge decompositions of a link $K$.
 Then there exist minimal collections of essential cut disks $\mathcal{D}_1$ and $\mathcal{D}_2$ for $T_1$ and $T_2$ respectively so that there exist  two  pants decompositions  $S_{\mathcal{D}_1}$ and $S_{\mathcal{D}_2}$ for $T_1$ and $T_2$ respectively which satisfy the rectangle condition if and only if there exist minimal collections of essential cut disks $\mathcal{D}_1'$ and $\mathcal{D}_2'$ for $T_{\epsilon}$ and $T_2'$ respectively so that there exist two  pants decompositions  $S_{\mathcal{D}'_1}$ and $S_{\mathcal{D}_2'}$ for  $T_\epsilon$ and $T_2'$ respectively which satisfy the rectangle condition.
 \end{Lem}
 \begin{proof}
 	First, we note that the pants decompositions have only one pair of pants. 
 	We consider the self-homeomorphism $H$ of $S^3$ as an extension of a homeomorphims $h$ from $P$ to $P'$ which is obtained from a combination of half Dehn twists to get from $\tau_1$ to $\epsilon$ by following up the bridge presentation. We note that the homeomorphism of $\Sigma_{0,6}$ preserves rectangles to satisfy the rectangle condition. Let $\mathcal{D}_i'=H(\mathcal{D}_i)$. Then it is enough to show that $\partial \mathcal{D}_1'$ and $\partial \mathcal{D}_2'$ bound the minimal collections of cut disks for $T_\epsilon$ and $T_2'$. 
 	By construction of $H$, it is clear that
 	$\partial \mathcal{D}_i'$ bounds essential cut disks. (Refer to \cite{5}.)
 	
 	 \end{proof}

 Unfortunately, there are  infinitely many different maximal collection of essential cut disks in $B^3-\epsilon$. In other words, we have infinitely many different pants decompositions of $\Sigma_{0,6}$ for $T_{\epsilon}$. So, it is impossible to check the all combinations of two pants decompositions. In this paper, we will especially discuss the case that $\mathcal{E}=\{E_1, E_2, E_3\}$ is the collection of maximal essential cut disks for $T_{\epsilon}$. (Refer to Figure~\ref{A1}.) Then, we note that  $\partial E_1\cup \partial E_2\cup \partial E_3$ is the boundaries of $S_{\mathcal{E}}$ which is pants decomposition. In order to get a pants decomposition for $T_2'$, we consider three bridge disks $\mathcal{F}=\{F_1,F_2,F_3\}$ for $T_2'$. Then let $f_i=F_i\cap\Sigma_{0,6}$ which is called \textit{essential} arc. Let $N(f_i)$ be the  regular neighborhoods of $f_i$ so that they are pairwise disjoint. Then by deleting $N(f_i)$ from $\Sigma_{0,6}$ we have a pants decomposition $S_{\mathcal{F}}$ for $T_2'$.
For an easier argument, now we consider three essential arcs $f_i$ which connect two punctures in $\Sigma_{0,6}$ instead of the three simple closed curves for the boundary components of $S_{\mathcal{F}}$.
Then, we want to modify the rectangle condition as follows.\\

Let $(T_1,T_2;P)$ be a $3$-bridge decomposition of a link $L$. 
Let $E_{B_i}^1,E_{B_i}^2,E_{B_i}^3$ be the collection of bridge disks for two rational $3$-tangles $T_1$ and $T_2$ respectively.
Then we take the collection of arcs $e_i^1=E_{B_i}^1\cap P,e_i^2=E_{B_i}^2\cap P,e_i^3=E_{B_i}^3\cap P$.
Then we say that the collections of arcs $\{e_1^1,e_1^2,e_1^3\}$ and $\{e_2^1,e_2^2,e_2^3\}$ satisfy the $\textit{rectangle condition}$ if there is a rectangle $R$ embedded in $P-((\cup_{i=1}^3 e_1^i)\cup(\cup_{j=1}^3 e_2^j))$ such that the interior of $R$ is disjoint from $(\cup_{i=1}^3 e_1^i)\cup(\cup_{j=1}^3 e_2^j)$ and the four edges of $\partial R$ are subarcs of the four entries of each combination as below respectively. \\

$(e_1^1,e_1^2,e_2^1,e_2^2)$ \hskip 20pt$(e_1^1,e_1^3,e_2^1,e_2^2)$ \hskip 20pt$(e_1^2,e_1^3,e_2^1,e_2^2)$ \hskip 20pt $(e_1^1,e_1^2,e_2^1,e_2^3)$\hskip 20pt$(e_1^1,e_1^3,e_2^1,e_2^3)$\\

$(e_1^2,e_1^3,e_2^1,e_2^3)$ \hskip 20pt $(e_1^1,e_1^2,e_2^2,e_2^3)$ \hskip 20pt$(e_1^1,e_1^3,e_2^2,e_2^3)$ \hskip 20pt$(e_1^2,e_1^3,e_2^2,e_2^3)$

\begin{figure}[htb]
	\includegraphics[scale=.6]{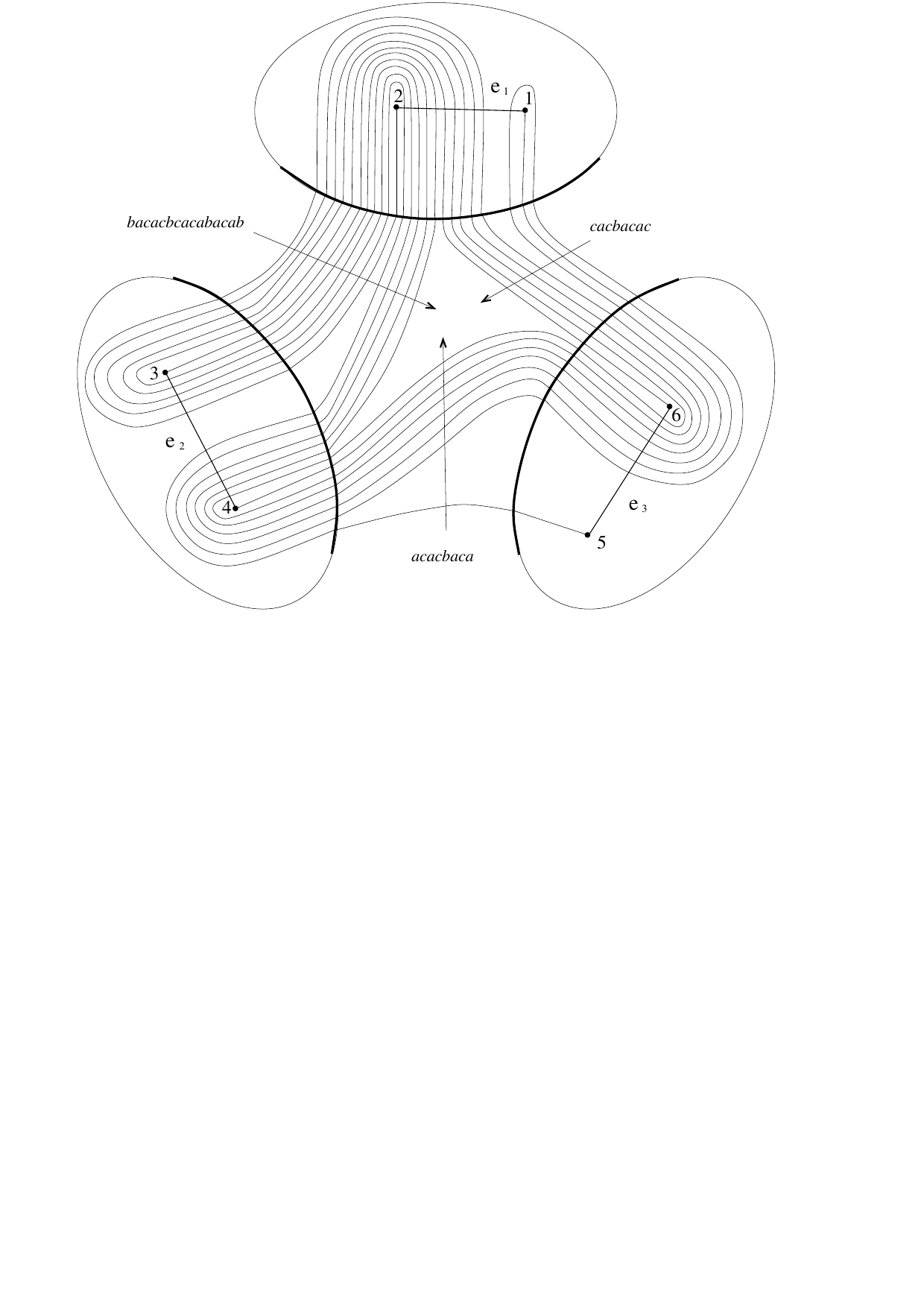}
	\vskip -280pt
	\caption{}
	\label{A4}
\end{figure}

\begin{Prop}\label{T12}  Suppose that $(T_1,T_2;P)$ is a $3$-bridge decomposition of a link $L$. 
	For given  collections of bridge disks $\{E_{B_i}^1,E_{B_i}^2,E_{B_i}^3\}$ for two rational $3$-tangles $T_1$ and $T_2$ respectively,
 take the collection of arcs $e_i^1=E_{B_i}^1\cap P,e_i^2=E_{B_i}^2\cap P,e_i^3=E_{B_i}^3\cap P$. If the collections of arcs $\{e_1^1,e_1^2,e_1^3\}$ and $\{e_2^1,e_2^2,e_2^3\}$ satisfy the rectangle condition in $\Sigma_{0,6}$ then $L=\tau_1\cup\tau_2$ is a $3$-bridge link.
	
\end{Prop}

The proof of Proposition~\ref{T12} is analogous to the proof of Theorem~\ref{T7}.\\

Now, we will discuss how to check whether or not two pants decompositions  satisfy the rectangle condition.
The figure~\ref{A4} shows that  $a\cup b\cup c$ and $e_1\cup e_2\cup e_3$ satisfy the rectangle condition, where  $a$ is the arc from  $2$ to $3$, $b$ is the arc from $1$ to $5$ and $c$ is the arc from $4$ to $6$, and $e_i$ are the straight line which connects two punctures in each two punctured disk $E_i'$.  The  alphabetic sequences are the orders of arcs, where.
 Then we can check that each sequence contains all the possible adjacent pairs $\{a,b\}, \{a,c\}$ and $\{b,c\}$.\\

We recall that $(p,q,t)$ determines the arc pattern in a two punctured disk as Figure~\ref{A5}. In order to analyize the arcs,  take the boundaries of the regular neighborhood of the essential arcs as in Figure~\ref{A5}.  We note that $p$ is the same with the intersection number between the three essential arcs and the window $\omega$. Except the two arcs start from the punctures, each component meets twice with $\omega$. So, we name the arcs by using positive integers $1$ to ${p\over 2}+1$. For example, we name $1,2,3,4,5$ to the arc components as in Figure~\ref{A5}.

\begin{figure}[htb]
\begin{center}
\includegraphics[scale=.35]{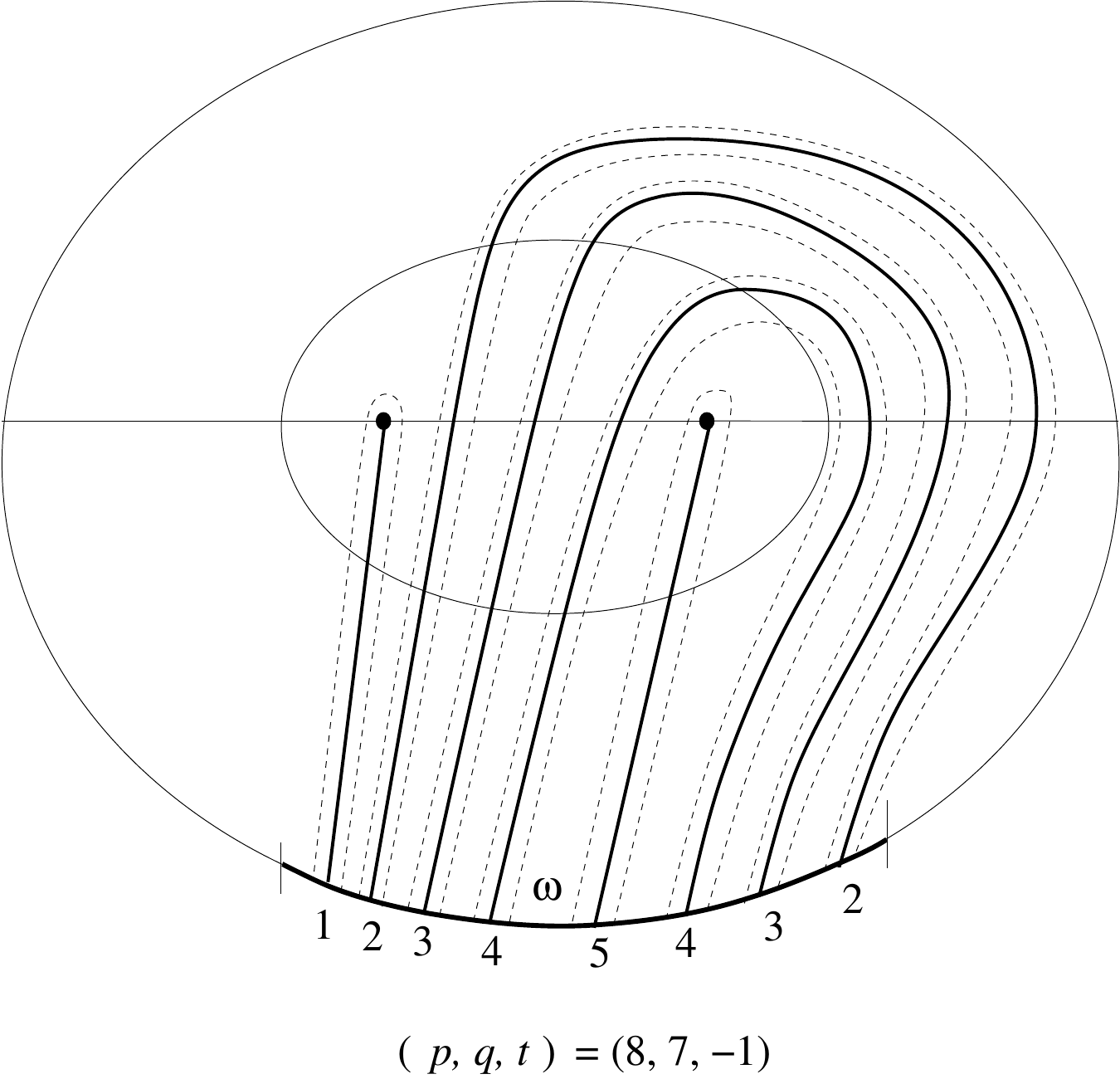}
\end{center}
\caption{}
\label{A5}
 \end{figure}
 
 \begin{Lem}\label{L1}
 $p$ and $q$ determine the pairs of intersection points in $\omega$ which are the two endpoints of each arc component  in 
 a two punctured disk as follows.\\
 
 \begin{enumerate}
 \item If $1\leq j\leq  {p-q-1\over 2}$ then replace $p-q+1-j$ by $j$.\\
 
 \item  If ${2p-q+1\over 2}< j\leq p$ then replace $j$ by $2p-q+1-j$. After this procedure, replace $j$ by $j-m_2$ if $2m_1+2\leq j\leq p$.
 \end{enumerate}
 \end{Lem}
 
 \begin{figure}[htb]
\begin{center}
\includegraphics[scale=.4]{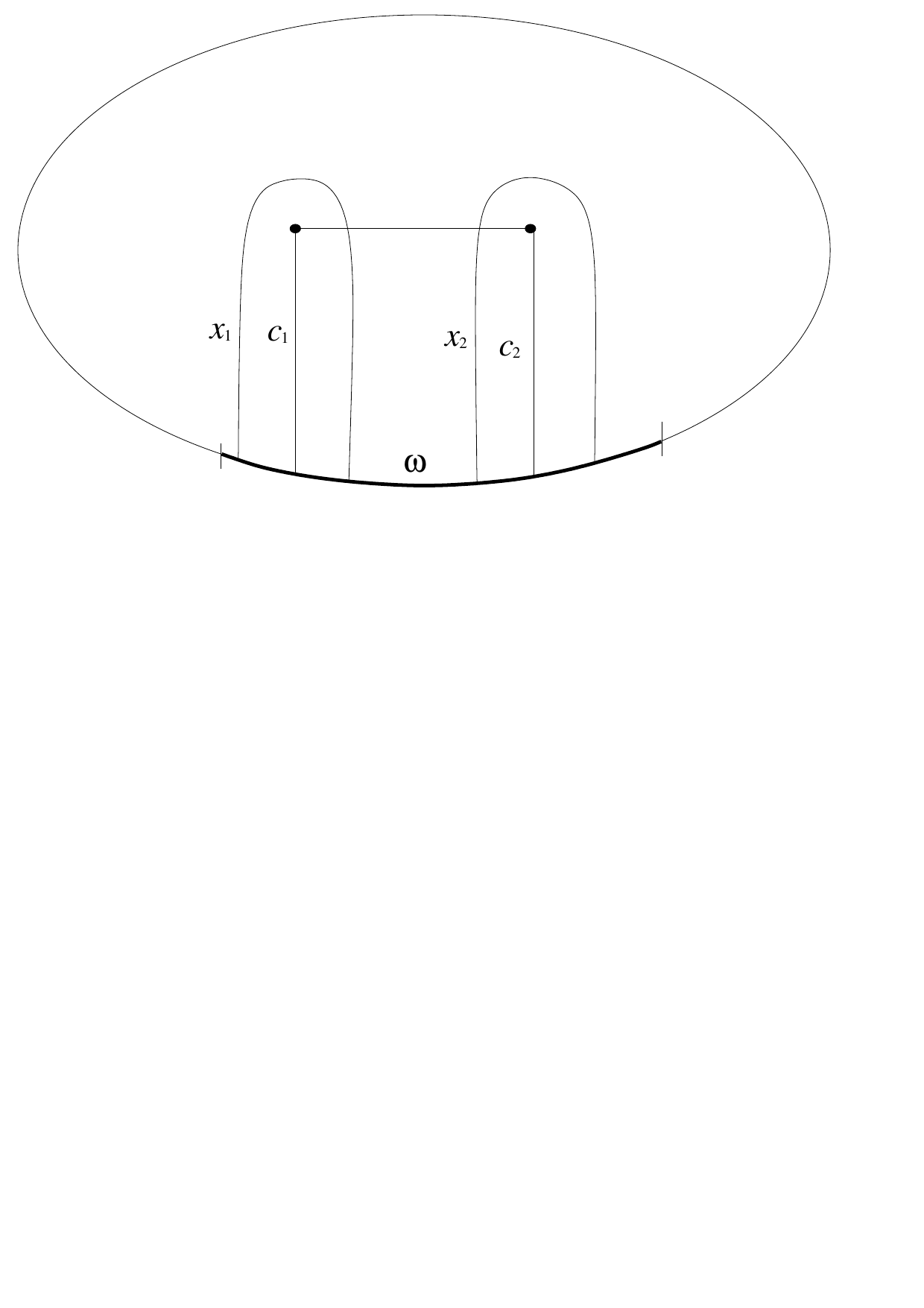}
\end{center}
\vskip -200pt
\caption{}
\label{A6}
 \end{figure}
 
\begin{proof}
First, we fix $t=-1$  since taking a different twisting number $t$ does not change the numbering of the intersection at $\omega$. Then we get a diagram as in Figure~\ref{A6}. Let $c_1$ and $c_2$ be the arc components start from the punctures $1$ and $2$ respectively. Then we note that the other arcs are isotopic to either $x_1$ or $x_2$ since $t=-1$. Let $m_i$ be the number of arcs which are isotopic to $x_i$ for $i=1,2$. Then we note that $q'=q+(-1)p=(-2m_1-1)$ by considering the connecting pattern of arcs between $e$ and $\omega$. Therefore, $m_1=(p-q-1)/2$.
 We also note that $2m_1+2m_2+2=p$. Therefore, we also get $m_2=(q-1)/2$. Then,
 the following is the cases we need to consider.
 
 \begin{enumerate}
 \item If $1\leq j\leq m_1$ then replace $2m_1+2-j$ by $j$ to give the same name to the other endpoint of the arc having the endpoint $j$.
 
 \item If $ 2m_1+m_2+2< j\leq p_i$ then replace $j$ by $2(2m_1+m_2+2)-j$ to give the same name to the other endpoint of the arc having the endpoint $j$.
 \end{enumerate}
 
 Then subtract $m_2$ from $j$ if $2m_1+2\leq j\leq p$ to assign the arcs to the arithmetic sequence $\{1,2,...,m_1+m_2+2\}$.
 This completes the proof.
 
\end{proof}
 
 We note that there are ${p_i+2\over 2}$ arc components in $E_i'$.
 In order to give a better numbering to the arcs in $E_1'\cup E_2'\cup E_3'$, we need to use the next positive integers to name the arcs in $E_j'$ for $j\neq i$. Then we just use positive integers from $1$ to ${p_1+p_2+p_3+6\over 2} $.
We note that we can eliminate the case that $x_{ii}>0$ for some $i$ since  if $x_{ii}>0$ for some $i$ then $\{a,b,c\}$ and $\{e_1, e_2,e_3 \}$  do not satisfy the rectangle condition.
 
 \begin{Prop}
$\{p_1,q_1,p_2,q_2,p_3,q_3\}$ determines the satisfaction of the rectangle condition.
 \end{Prop}
 
 \begin{proof} 
 	By Lemma~\ref{L1}, we can assign positive integers to the arcs in $E_i$ by 
$\{p_1,q_1,p_2,q_2,p_3,q_3\}$. Also, The weights $x_{ij}$ for the standard arcs are determined by $I_i=p_i$ by referring to Lemma~\ref{T4}.	We may assume that $x_{ij}>0$ if $i\neq j$ and  $x_{ii}=0$ for all $i$. If not,  then we easily can check that  $\{a,b,c\}$ and $\{e_1, e_2,e_3 \}$  do not satisfy the rectangle condition.
Let $(\alpha_1,\alpha_2,...,\alpha_{p_1}), (\beta_1,\beta_2,...,\beta_{p_2})$ and $(\gamma_1,\gamma_2,...,\gamma_{p_3})$ be the ordered numbering from the left to the right at $\omega_i$.
 Then we have equalities to connect the intersection points at $\omega$ in $I$ by $x_{ij}$. i.e., $\alpha_i=\beta_{p_2+1-i}$ for $1\leq i\leq x_{12}$, $\gamma_j=\alpha_{p_1+1-j}$ for $1\leq j\leq x_{13}$ and $\beta_k=\gamma_{p_3+1-k}$ for $1\leq k\leq x_{23}$. Then we define the equivalent classes as follows. Two positive integers are \textit{equivalent} if there exist finite equalities to match the two numbers. Then we note that there are exactly three equivalence classes. If there are more than three classes, then there exists an equivalence class which makes
 a loop in $\Sigma_{0,6}$ since $p_1+p_2+p_3$ is finite. This contradicts the assumption that there are exactly $3$ arcs. If there are less than three classes then this also contradicts the assumption that there are exactly three essential arcs. Then by assigning three representative numbers to the arcs in $I$ we can check whether or not they satisfy the rectangle condition.
 \end{proof}
 
Now, we investigate the diagram in Figure~\ref{A4} for an example.
  We note that $(p_1,q_1,p_2,q_2,p_3,q_3)$ $=(24,3,24,11,16,1)$ and $t_i=-1$ for all $i$.
Since $m_1={24-3-1\over 2}=10$ and $m_2={3-1\over 2}=1$ for $E_1'$, we have $1=21$, $2=20$, $3=19$, ..., $11$ and $22=24=12$ by $22-10=12$ and $23=13$ by $23-10=13$. Therefore, we have a numbering for $E_1'$ as $(1,2,3,4,5,6,7,8,9,10,11,10,9,8,7,6,5,$ $4,3,2,1,12,13,12)$. Similarly, we have numberings for $E_2'$ and $E_3'$ as $(14,15,16,17,18,19,20,19,$ $ 18,17,16,15,14,21,22,23,24,25,26,25,24,23,22,21)$ and $(27,28,29,30,31,32,33,34,33,32,$ $31,30,29,28,27,35)$. Also, we have $x_{12}=16, x_{13}=8$ and $x_{23}=8$ from $(p_1,p_2,p_3)=(24,24,16)$. Then we have\\
 
 $(1,2,3,4,5,6,7,8,9,10,11,10,9,8,7,6)=(21,22,23,24,25,26,25,24,23,22,21,14, 15,16,$ $17,18)$ and $(27,28,29,30,31,32,33,34)=(12,13,12,1,2,3,4,5)$, and
  $(14,15,16,17,18,19,20,$ $19)=(35,27,28,29,30,31,32,33)$.\\

Therefore, we have three equivalent classes, $a:=13=28=16=8=24=4=33=19=31=2=22=10=14=35$, $b:=11=21=1=30=18=6=26$ and $c:=20=32=3=23=9=15=27=12=29=17=7=25=5=34$.\\

By assigning the prepresentatives $a,b,c$ to the equivalent positive integers,
 we have
 
  $(x_{12},bacacbcacabacacb)$, $(x_{13},cacbacac)$ and $(x_{23},acacbaca)$.
This implies that the example satisfies the rectangle condition.

\section{A family of $3$-bridge links}\label{B6}

Let $Q$ be the flat disk in the $xy$-plane bounded by the great circle $C$  so that a rational 3-tangle $T$ is arranged to be in general position with respect to the projection onto $Q$ as in Figure~\ref{A7}. Emert and Ernst~\cite{3} defined an interesting family of alternating $3$-tangles which are called \textit{essential}. (Refer to~\cite{3}.) Let $C_0$ be the nested circle in $Q$ so that it encloses the trivial $3$-tangle as in Figure~\ref{A7}. Then let $C_1,...,C_n=\partial Q$ be a sequence of nested circles in $Q$ so that each $C_i$ contains a rational $3$-tangle and the annulus $A_i$ bounded by $C_{i-1}$ and $C_i$ contains at most three different twisting patterns as in Figure~\ref{A7}. The three twisting patterns in $A_i$ are as follows. Let $P_j^i$ be the intersection points between $T$ and  $C_i$ in $Q$ for $1\leq i\leq n$ and $1\leq j\leq 6$ as in Figure~\ref{A7}.
 Let $t_i^j$ be the twisting number between the two substrings of $T$ whose endpoints are $\{P_{i}^{j-1},P_{i+1}^{j-1},P_{i}^j,P_{i+1}^{j}\}$. We have $t_j^i=0$ if $i+j$ is an odd number. So, there possibly exist at most three twisting patterns in $A_i$. We say that an alternating rational $3$-tangle is $\emph{essential}$ if  each twisting numbers $t_i^j$ are nonzero when $i+j$ is an even number. (Refer to Figure~\ref{A7}.) Let $EAT^n$ be the set of essential alternating rational $3$-tangles.
  Especially, we say that an essential alternating  rational $3$-tangle is $\emph{special}$, denoted by  $SAT^n$, if  $t_i^j=1$ when $i+j$ is an even number. (Refer to Figure~\ref{A8}.)
   Then we consider two types of closure for a given tangle $T$ to have a link $L$ as follows.
  \begin{figure}[htb]
  	\begin{center}
  		\includegraphics[scale=.4]{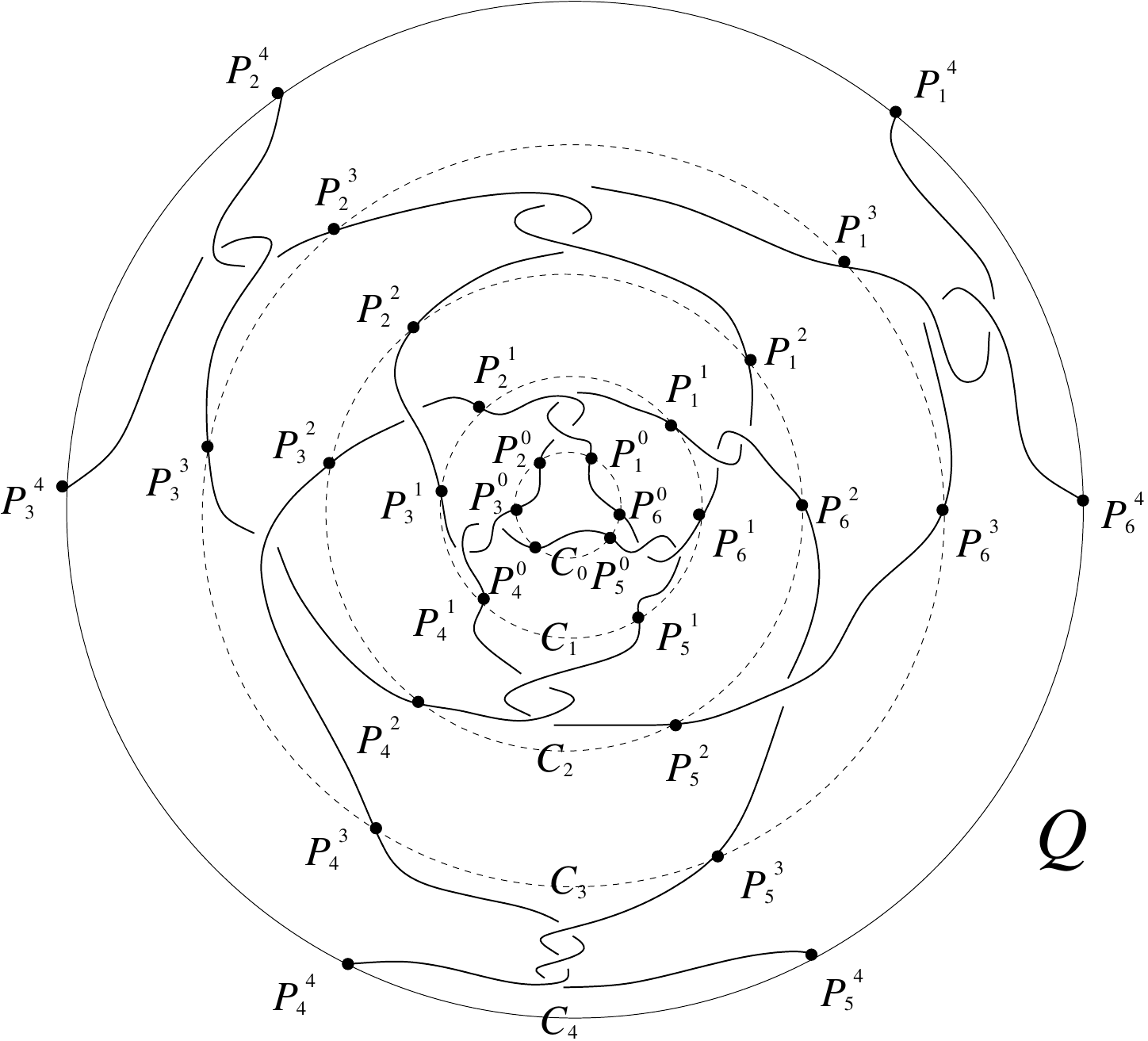}
  	\end{center}
  	\caption{A positive essential rational 3-tangle}
  	\label{A7}
  \end{figure}
 \begin{enumerate}
 	\item Numerator  closures of $T$ denoted by $N(T)$: connect the two points of pairs $(P_1^n,P_2^n),$ $(P_3^n,P_4^n),(P_5^n,P_6^n)$ in $\overline{Q^c}$ with unknotted arcs as in the diagrams $(b)$ and $(d)$ of Figure~\ref{A8}.
 	\item Denominator closures of $T$ denoted by $D(T)$: connect the two points of pairs $(P_2^n,P_3^n),$ $(P_4^n,P_5^n),(P_6^n,P_1^n)$ in $\overline{Q^c}$  with unknotted arcs as in the diagrams $(a), (c)$ and $(e)$ of Figure~\ref{A8}.
 \end{enumerate}
  \begin{figure}[htb]
  	\begin{center}
  		\includegraphics[scale=.2]{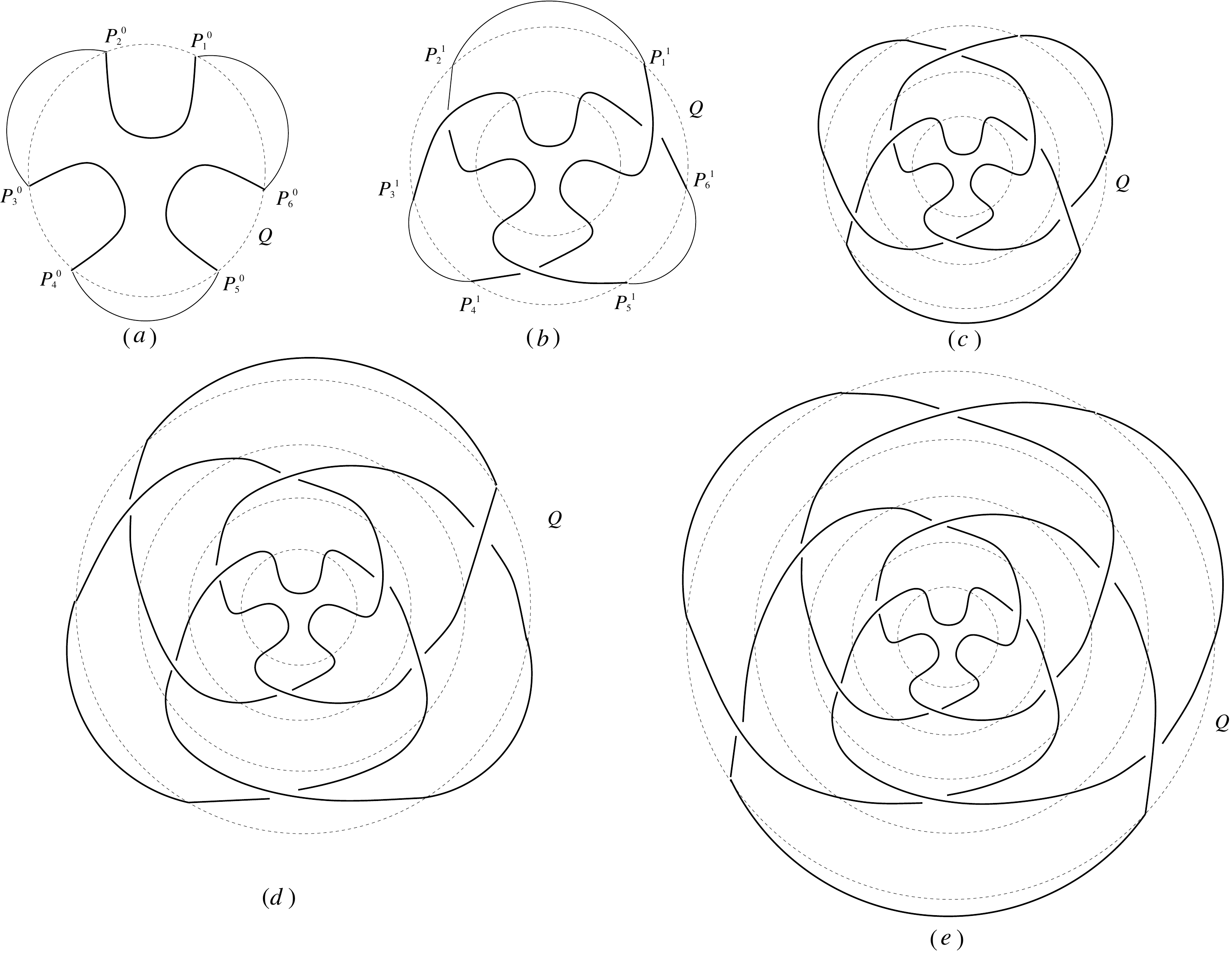}
  	\end{center}
  	\caption{Special essential alternating rational $3$-tangles and their closures}
  	\label{A8}
  \end{figure}

\vskip 10pt
 We note that there are five different $\emph{trivial}$ closures of $T$ and the numerator and the denominator closures are two of them, where $\emph{trivial}$ means  that the rational $3$-tangle for a closure of $T$ in ${Q^c}$ does not have any crossings. Then, we note that $N(SAT^{2k+1})$ and $D(SAT^{2k})$ are prime links by Theorem~\ref{T-1}.

\begin{Thm}[Menasco~\cite{11}]\label{T-1}
Suppose $L$ is a link that has an alternating diagram $D$. Then $L$ is a prime link if and only if $D$ is a prime diagram.
\end{Thm}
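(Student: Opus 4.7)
The statement is a version of the classical theorem (Menasco / Lickorish) characterizing primeness of alternating links via their diagrams, and I would split the proof into the two implications.

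\emph{Easy direction} ($D$ not prime $\Rightarrow L$ not prime): Suppose there is a simple closed curve $c$ in the projection plane meeting $D$ transversely at exactly two non-crossing points, with at least one crossing of $D$ on each side of $c$. Thickening $c$ to a 2-sphere $S \subset S^3$ that meets $L$ in precisely two points produces a decomposing sphere for $L$, so $L$ is a nontrivial connected sum. Contrapositive gives one direction of the biconditional.

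\emph{Hard direction} ($D$ prime $\Rightarrow L$ prime): I would use Menasco's crossing-ball setup. Place $D$ on a projection 2-sphere $F \subset S^3$, replace a neighborhood of each crossing by a 3-ball $B_i$ (a \emph{bubble}) so that $L \cap B_i$ consists of the two standard over/under arcs, and so that $F$ bisects each $B_i$. Outside the bubbles, $L$ lies in $F$. Assume for contradiction that $L = L_1 \# L_2$ nontrivially, with decomposing sphere $S$ meeting $L$ in two points. First, put $S$ transverse to $F$ and the bubbles and minimize, in lexicographic order, the pair (number of saddle-type disks in $S \cap (\cup B_i)$, $|S \cap F|$). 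Standard innermost-disk and outermost-arc arguments, applied to $S \cap F$ inside each bubble and inside each complementary region of $F$, force each component of $S \cap B_i$ to be either a disk disjoint from $L$ or a saddle disk meeting each strand of $L\cap B_i$ once. Any other configuration can be removed by an isotopy that reduces the minimized complexity.

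\emph{Closing the argument.} With $S$ in this standard position, each component of $S \cap F$ is a simple closed curve that, at each bubble it crosses, enters along an over-strand side and exits along an under-strand side (or vice versa). Because $D$ is alternating, the alternating pattern forces the parity of over/under crossings along any closed component of $S \cap F$ to balance, and an innermost component bounding a disk in $S$ disjoint from $L$ can be pushed off $F$, contradicting minimality. The only remaining possibility is that $S \cap F$ is a single simple closed curve meeting $D$ transversely in exactly two non-crossing points, with crossings on both sides (otherwise $S$ would be inessential). This curve exhibits $D$ as a non-trivial connected sum of diagrams, contradicting the primality of $D$.

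\textbf{Main obstacle.} The delicate step is the parity argument in the last paragraph: it is precisely there that the alternating hypothesis is used to rule out closed intersection components of $S \cap F$ that are not already witnesses of a connected-sum decomposition of $D$. Getting the saddle-disk classification and then the alternating-parity elimination to mesh correctly, without creating new intersections while isotoping, is the technical heart of Menasco's original argument and would be the part requiring the most care to write out in full.
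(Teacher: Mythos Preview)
The paper does not prove this theorem at all: it is stated with attribution to Lickorish and used as a black box, with no argument given. So there is no ``paper's own proof'' to compare your proposal against.

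That said, your sketch is essentially the standard Menasco argument and is correct in outline. One point worth tightening in the ``easy'' direction: from a circle $c$ meeting $D$ in two points with crossings on each side you obtain a $2$-sphere exhibiting $L$ as a connected sum, but to conclude $L$ is \emph{non-trivially} composite you still need that each summand is a nontrivial link. This is where the alternating hypothesis (and implicitly reducedness of $D$) enters even on this side: a reduced alternating diagram with at least one crossing represents a nontrivial link, e.g.\ by the span of the Jones polynomial. Without that, a crossing on each side does not by itself rule out a trivial summand. Your ``hard'' direction is the genuine Menasco crossing-ball argument; the parity/alternation step you flag as the main obstacle is indeed the crux, and your description of it is accurate if terse.
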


Also, the following lemma help to know whether or not $N(SAT^{2k+1})$ or $D(SAT^{2k})$ is a knot.
 
 \begin{Lem}\label{T1}
 $N(SAT^{6k-5})$, $N(SAT^{6k-3})$, $D(SAT^{6k-6})$ and $D(SAT^{6k-2})$ are  knots for $k\in\mathbb{Z}_+$. Moreover, $N(SAT^{6k-1})$ and $D(SAT^{6k-4})$ are links with three components for $k\in\mathbb{Z}_+$.
 \end{Lem}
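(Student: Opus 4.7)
The plan is to reduce the lemma to a finite combinatorial check on the induced permutation of the six tangle endpoints on the outer boundary $C_n$. I would first record how the six strands permute through each annular layer $A_j$ of $SAT^n$: because every nonzero $t_i^j$ equals $1$, each twisting pattern is a half twist and therefore transposes the two adjacent strands it involves. Writing $\pi_{\text{odd}}$ and $\pi_{\text{even}}$ for the resulting involutions on $\{1,\dots,6\}$ at odd and even levels respectively, each is a product of three disjoint transpositions, hence a fixed-point-free involution in $S_6$.

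Next I would exhibit a period-$6$ phenomenon. A short direct computation shows that $\pi_{\text{even}}\pi_{\text{odd}}$ is a product of two disjoint $3$-cycles, so has order $3$ in $S_6$. Therefore the cumulative permutation $\Pi_n=\pi_n\circ\cdots\circ\pi_1$ satisfies $\Pi_{n+6}=\Pi_n$. The pairing $\mu_n$ of $\{1,\dots,6\}$ induced by $SAT^n$ on $C_n$ is obtained by transporting the trivial-tangle pairing inside $C_0$ through $\Pi_n$; consequently $\mu_n$, and therefore the number of components of $N(SAT^n)$ and $D(SAT^n)$, depends only on $n\bmod 6$.

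The third step is a direct cycle count. Given the tangle pairing $\mu$ and a closure pairing $\nu$, both fixed-point-free involutions on $\{1,\dots,6\}$, the components of the resulting link are in bijection with the cycles of the $2$-regular graph on $\{1,\dots,6\}$ whose edge set is $\mu\cup\nu$. Checking each residue of $n$ modulo $6$ against $\nu_N=\{(1,2),(3,4),(5,6)\}$ for odd $n$ and $\nu_D=\{(2,3),(4,5),(6,1)\}$ for even $n$, the computation yields a single $6$-cycle (hence a knot) exactly for $n\equiv 1,3\pmod 6$ under $\nu_N$ and for $n\equiv 0,4\pmod 6$ under $\nu_D$, which handles $N(SAT^{6k-5})$, $N(SAT^{6k-3})$, $D(SAT^{6k-6})$, $D(SAT^{6k-2})$; and three disjoint $2$-cycles (a three-component link) exactly for $n\equiv 5\pmod 6$ under $\nu_N$ and $n\equiv 2\pmod 6$ under $\nu_D$, which handles $N(SAT^{6k-1})$ and $D(SAT^{6k-4})$.

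The main obstacle will be fixing the combinatorial conventions compatibly with Figure~\ref{p1}: the cyclic labeling of the six punctures on each $C_j$, the identification of $\pi_{\text{odd}}$ versus $\pi_{\text{even}}$ as specific products of three transpositions, and the pairing realized by the trivial rational $3$-tangle inside $C_0$. Once these are pinned down, Steps two and three reduce to a small table of permutation products and cycle counts that directly produce the claimed component numbers.
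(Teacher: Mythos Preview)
Your proposal is correct and takes essentially the same approach as the paper: the paper also tracks the connectivity of the six strands through successive annular layers, observes that the induced permutation of the ordered endpoints returns to the identity after six levels, and then checks the six residue classes of $n$ modulo $6$ individually. Your permutation-group packaging (noting $\pi_{\text{even}}\pi_{\text{odd}}$ has order $3$) is a slightly tidier explanation of the period-$6$ phenomenon that the paper establishes by direct enumeration of the sequences $(P_1^j,\ldots,P_6^j)$, but the content is the same.
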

 
 \begin{proof}
 From the ordered intersection points $(P_{1}^{0},P_{2}^{0},P_{3}^{0},P_{4}^{0},P_{5}^{0},P_{6}^{0})$, we get the ordered intersection points $(P_{6}^{1},P_{3}^{1},P_{2}^{1},P_{5}^{1},P_{4}^{1},P_{1}^{1})$ if we consider the connectivity of the arcs between $C_0$ and $C_1$. Similarly, we have  four more ordered sequences for the connectivity of the arcs through from $C_1$ to $C_5$. These are $(P_{5}^{2},P_{4}^{2},P_{1}^{2},P_{6}^{2},P_{3}^{2},P_{2}^{2})$, $(P_{4}^{3},P_{5}^{3},P_{6}^{3},P_{1}^{3},P_{2}^{3},P_{3}^{3})$, $(P_{3}^{4},P_{6}^{4},P_{5}^{4},P_{2}^{4},P_{1}^{4},P_{4}^{4})$ and $(P_{2}^{5},P_{1}^{5},P_{4}^{5},P_{3}^{5},P_{6}^{5},P_{5}^{5})$.
 Then we note that we have $(P_{1}^{6},P_{2}^{6},P_{3}^{6},$ $P_{4}^{6},P_{5}^{6},P_{6}^{6})$ for the connectivity of arcs between $C_5$ and $C_6$. Moreover, we note that the lower indices of the sequence is the same with the first ordered sequence $(P_{1}^{0},P_{2}^{0},P_{3}^{0},P_{4}^{0},P_{5}^{0},P_{6}^{0})$. Therefore, by considering the six subcases from $n=0$ to $n=5$, we have this lemma.
 \end{proof}

Let $(SAT^{2k+1},T^N_{\epsilon},P)$ and $(SAT^{2k},T^D_{\epsilon},P)$ be the $3$-bridge decompositions of $N(SAT^{2k+1})$ and $D(SAT^{2k})$ respectively, where $T_{\epsilon}^N$ and $T_{\epsilon}^D$ are the trivial tangles in $\overline{Q^c}$ to have numerator closure and denominator closure of a tangle respectively as in Figure~\ref{A8}.
Let $\epsilon^N=\{\epsilon_1^N,\epsilon_2^N,\epsilon_3^N\}$ and $\epsilon^D=\{\epsilon_1^D,\epsilon_2^D,\epsilon_3^D\}$ be the collection of trivial red arcs   for $T^N_{\epsilon}$ and $T^D_{\epsilon}$ respectively as in Figure~\ref{A9}.
Then we note that the diagrams $(d)$ and $(e)$ of Figure~\ref{A9} shows us that $N(SAT^3)$ and $D(SAT^4)$ are $3$-bridge links since the collections of arcs satisfy  the rectangle condition.
\begin{figure}[htb]
	\begin{center}
		\includegraphics[scale=.2]{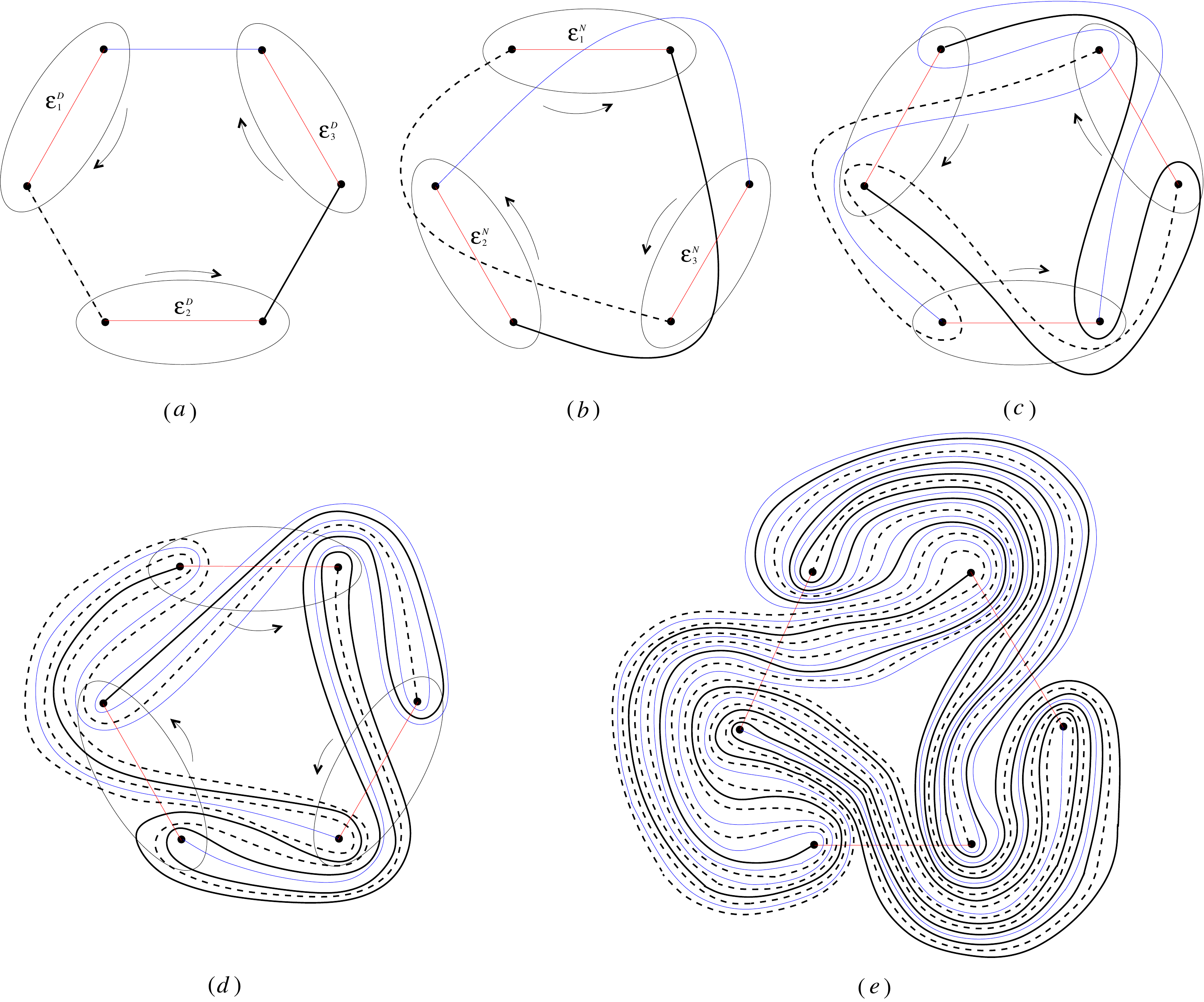}
	\end{center}
	\caption{}
	\label{A9}
\end{figure}
Now, we consider the hexagon $H$ so that $\partial H =\{\epsilon_1^N,\epsilon_2^N,\epsilon_3^N,\epsilon_1^D,\epsilon_2^D,\epsilon_3^D \}$ as Figure~\ref{A10}. It is called the \textit{Hexagon diagram}. Let $a_i$ be the name of the arcs instead of $\epsilon_j^N$ or $\epsilon_k^D$ as in Figure~\ref{A10}. Then, we can define the \textit{weights}  of a simple closed curve $\gamma$ with respect to $H$ as follows. Let $x_{ij}$ be the number of parallel arcs of $\gamma$ from $a_i$ to $a_j$ in $H$. Also, let $x^{kl}$ be the number of parallel arcs of $\gamma$ from $a_k$ to $a_l$ in $H^c$.

\begin{figure}[htb]
	\begin{center}
		\includegraphics[scale=.3]{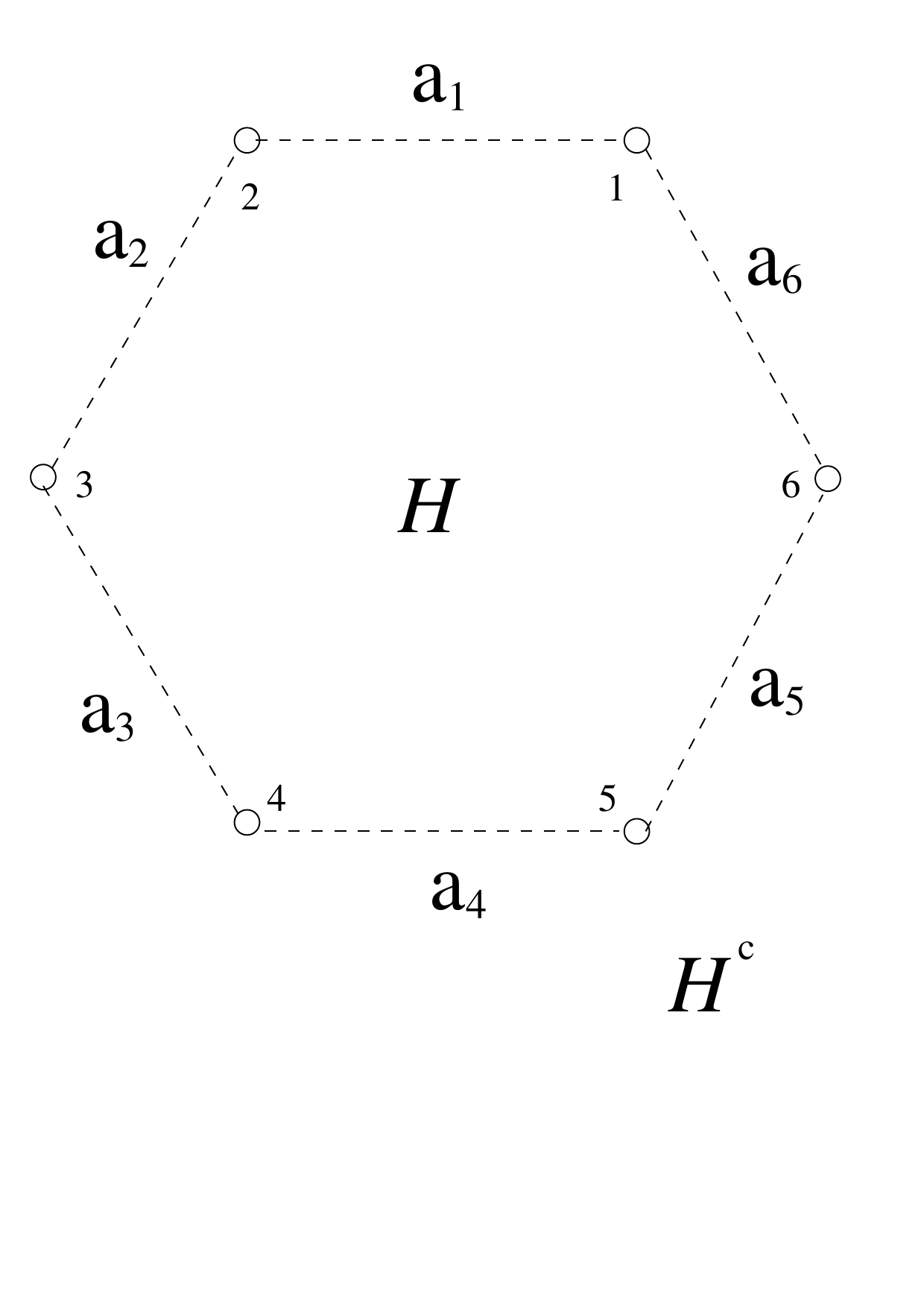}
	\end{center}
	\vskip -50pt
	\caption{}
	\label{A10}
\end{figure}

\begin{Thm}[Kwon~\cite{5}, Lemma 4.3]\label{T-3}
	Suppose that a simple closed curve $\gamma$ is in minimal general position with respect to $\partial H$, where `minimal' means that $\gamma$ and $\partial H$ do not allow to have any bigon. Then the weights $x_{ij}$ and $x^{kl}$ are well defined.
\end{Thm}

We note that $x_{ii}=0$ since $\gamma$ is minimal general position with respect to $\partial H$. Especially, we~\cite{5} made a certain formulas for the weight changes when we apply the half Dehn twists supported on the six  $2$-punctured disks given in the diagrams $(a)$ and $(b)$ of Figure~\ref{A9}. When we apply a half Dehn twist $h$ supported on a $2$-punctured disk, the arcs for $x_{ij}$ or $x^{kl}$ for some $i,j,k,l\in\{1,2,3,4,5,6\}$ in $H$ or $H^c$ possibly become  the arcs for $y_{mm}$ or $y^{nn}$ for some $m, n\in\{1,2,3,4,5,6\}$, where $y_{ij}$ and $y^{kl}$ be the weights of $h(\gamma)$. Then we isotope the arcs to have $y_{mm}=0$ or $y^{nn}=0$ to have new well defined weights. Then, we say that $x_{ij}$ or $x^{kl}$ is \textit{vanished} (by the half Dehn twist $h$). 

\begin{Lem}\label{T-2}
	Suppose that $x_{16}=x_{23}=x_{45}=x^{12}=x^{34}=x^{56}=0$. Then if we apply a half Dehn twist supported on $E_i'$ counterclockwise for some $i$ then none of weights $x_{ij}$ and $x^{kl}$ is vanished. Moreover,  $y_{16}=y_{23}=y_{45}=y^{12}=y^{34}=y^{56}=0$ after applying a sequence of half Dehn twists supported on $E_1', E_2'$ or $E_3'$, where $y_{ij}$ and $y^{kl}$ are the new weights after the sequence of half Dehn twists.
	\end{Lem}
	
	\begin{proof}
		Refer to Theorem 4.4 in \cite{5}.
		\end{proof}

We note that Theorem~\ref{T-3} and Lemma~\ref{T-2} are dealing with a simple closed curve not a simple arc. However, if we take the bounary of a proper regular neighborhood of the simple arc they are useful to prove my following assertions.

\begin{Thm}\label{T0}
	$N(SAT^{2k+1})$ and $D(SAT^{2k+2})$ are $3$-bridge links, where $k$ is a positive integer.
\end{Thm}
\begin{proof}

	By the diagram $(d)$  of Figure~\ref{A9}, we note that $N(SAT^3)$ has two pants decompositions to satisfy the rectangle condition. Therefore,  $N(SAT^3)$ is a $3$-bridge link. Now, take  regular neighborhoods of the three essential arcs (blue, black, dotted black) in $(d)$ of Figure~\ref{A9} so that they are pairwise disjoint and the boundaries of the regular neighborhood do not make any bigon with $\partial H$. Now, we consider a modified train track as in Figure~\ref{A11}. The first diagram shows a modified train track which carries the boundries of the regular neighborhood of the three arcs.   Especially, we note that $x_{16}=x_{23}=x_{45}=x^{12}=x^{34}=x^{56}=0$. We note that none of the weights for the given blue arc types in the first diagram is zero.
	\begin{figure}[htb]
		\begin{center}
		\includegraphics[scale=.5]{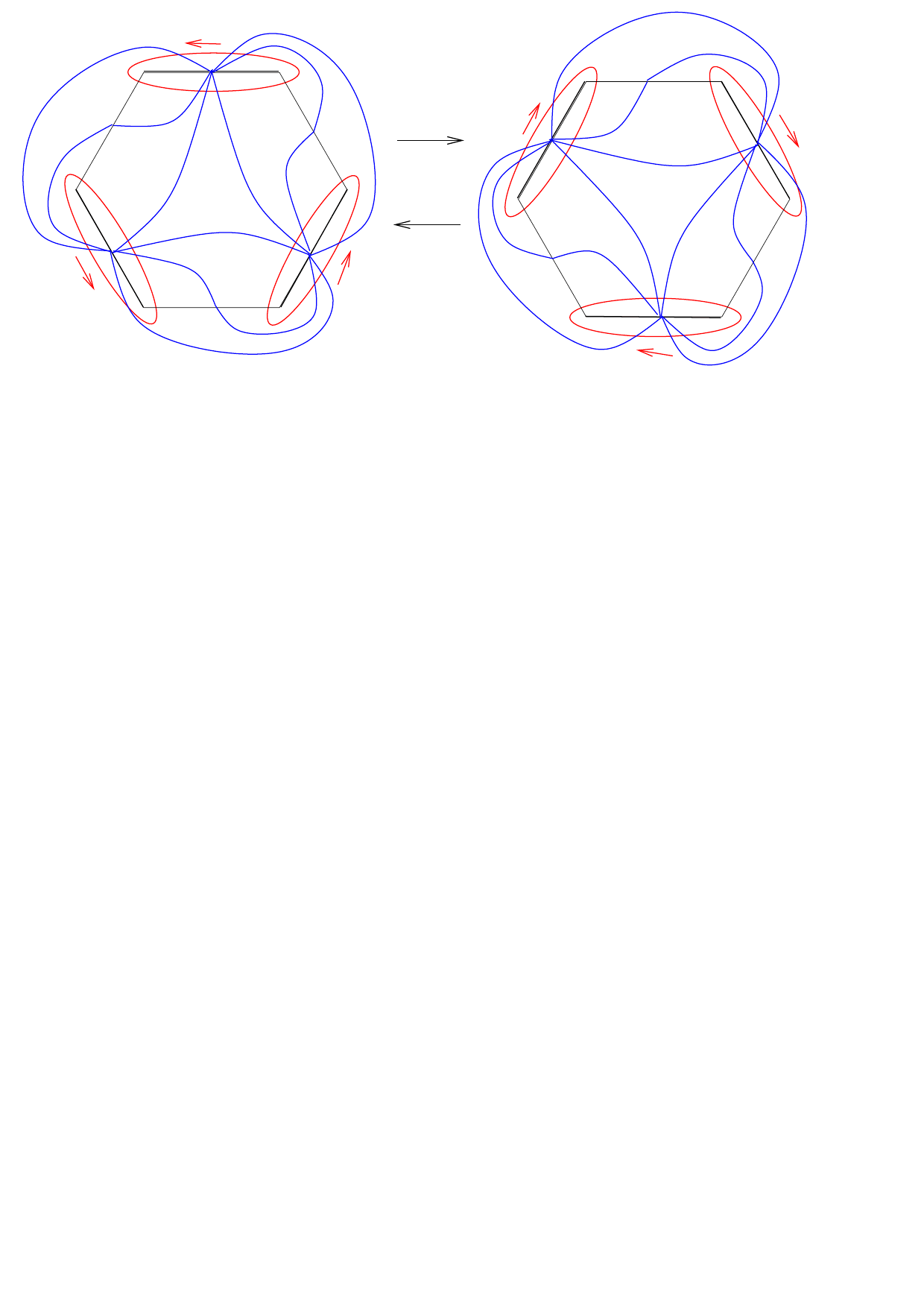}
\end{center}
\vskip -300pt
	\caption{}
	\label{A11}
	\end{figure}
	
	Then the second diagram of Figure~\ref{A11} is the newly obtained train track diagram after applying the three half Dehn twists supported on $E_i'$ counterclockwise respectively. We note that none of the weights for the given blue arc types  in the second diagram is zero. By Lemma~\ref{T-2}, none of weights $x_{ij}$ and $x^{kl}$ of the three essential arcs is vanished. Therefore, in order to show $D(SAT^4)$ is a $3$-bridge link, it is enough to show the arc types representing the rectangles to satisfy the rectangle condition with $\{\epsilon_1^N,\epsilon_2^N,\epsilon_3^N\}$ become  arc types representing the rectangles to satisfy the rectangle condition with $\{\epsilon_1^D,\epsilon_2^D,\epsilon_3^D\}$. Especially, by using the symmetry of the diagram, it is enough to check the three rectangles between $\epsilon_1^N$ and $\epsilon_2^N$.  We note that there are more than one path to carry a rectangle type to satisfy the rectangle condition. However, we point out that all the necessary rectangles to satisfy the rectangle condition are  carried by the arc $a$ or $b$ in Figure~\ref{A12}. Then after applying the three half Dehn twists supported on $E_i'$ counterclockwise, the rectangles are carried by the arc $c$ in Figure~\ref{A12}. Then they are the rectangles to satisfy the rectangle condition between  $\epsilon_1^D$ and $\epsilon_3^D$. The other two cases make the rectangles between $\epsilon_1^D$ and $\epsilon_2^D$, and $\epsilon_2^D$ and $\epsilon_3^D$. This implies that $D(SAT^4)$ is a $3$-bridge link. To show $N(SAT^5)$ is a $3$-bridge link, we note that the first diagram is the newly obtained train track diagram from the second diagram for $D(SAT^4)$ by applying the three half Dehn twists supported on the three two punctured disks in the second diagram in Figure~\ref{A11} clockwise. By using a similar argument, we can show that $N(SAT^5)$ is also a $3$-bridge link. By repeating this argument, we show that $N(SAT^{2k+1})$ and $D(SAT^{2k+2})$ are $3$-bridge links for all $k$.
		\begin{figure}[htb]
			\begin{center}
				\includegraphics[scale=.5]{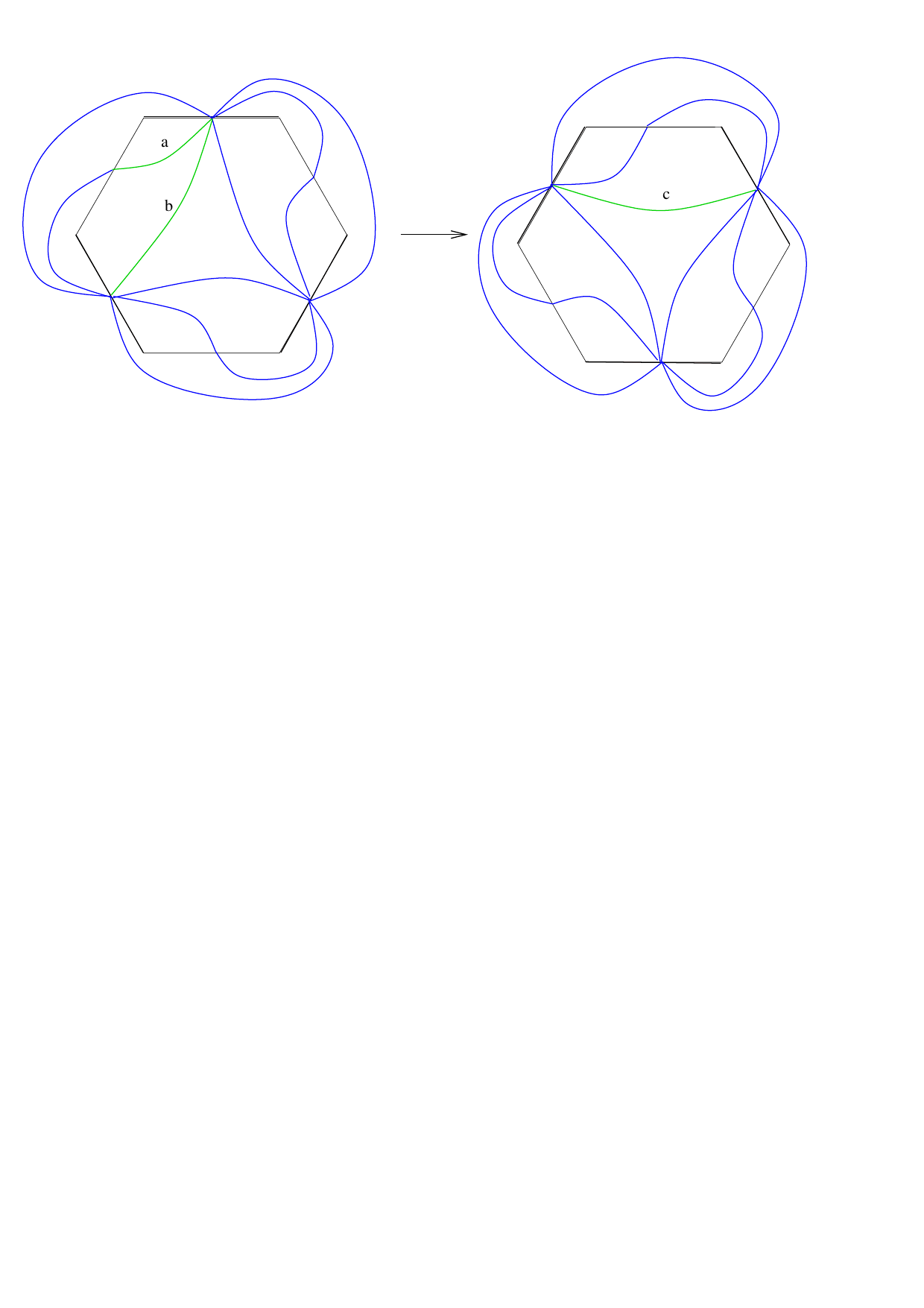}
			\end{center}
			\vskip -300pt
			\caption{}
			\label{A12}
		\end{figure}
	\end{proof} 
	
	\begin{Cor}
	    $N(EAT^{2k+1})$ and $D(EAT^{2k+2})$ are $3$-bridge links if $k\geq 1$.
	\end{Cor}
	\begin{figure}[htb]
		\begin{center}
			\includegraphics[scale=.5]{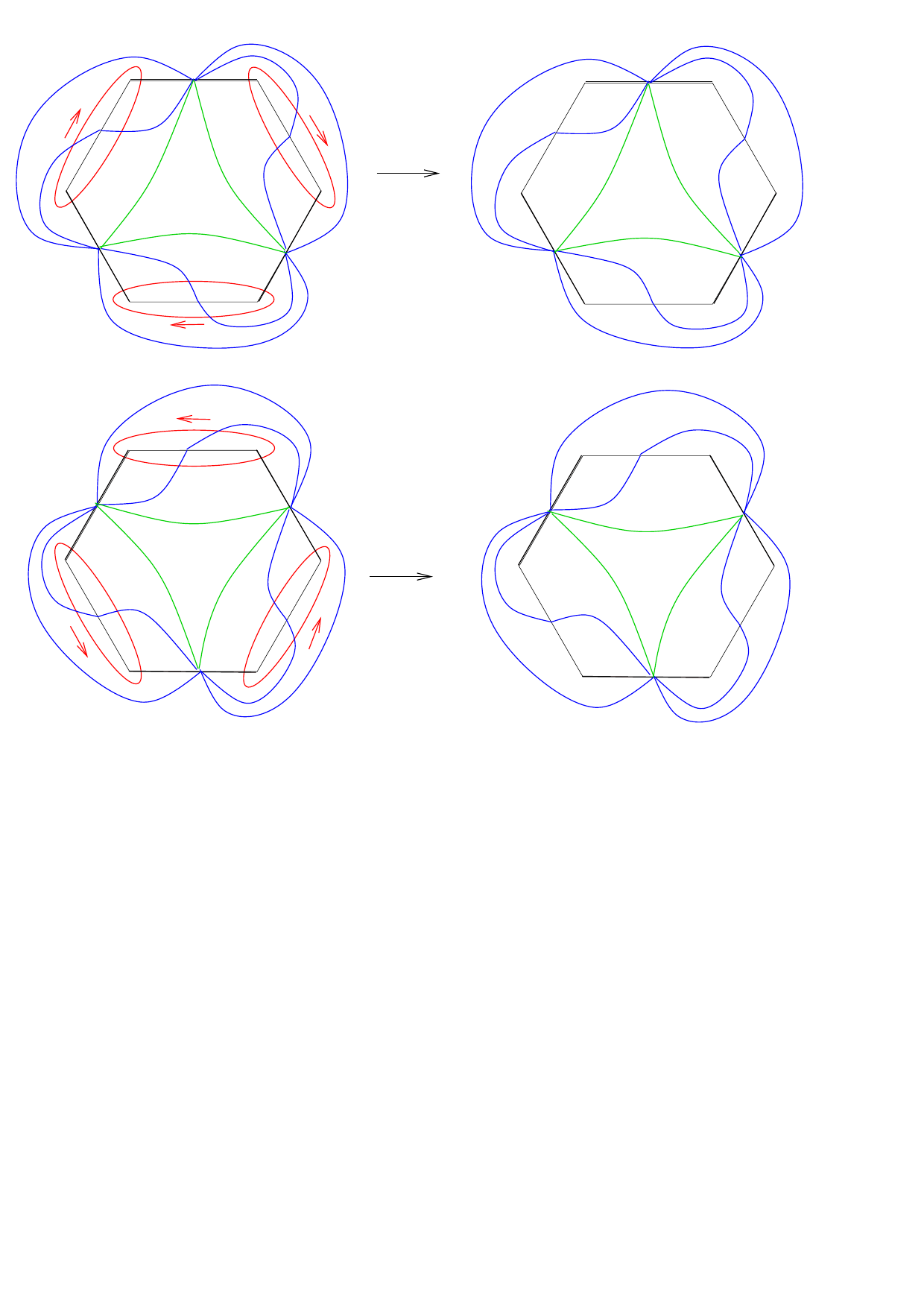}
		\end{center}
		\vskip -200pt
		\caption{}
		\label{A13}
	\end{figure}
	\begin{proof}
		 First of all, we note that if we apply more half Dehn twists   to the given directions supported on the three $2$-punctured disks of each left diagram of Figure~\ref{A13} then we still have the same train track diagram having nonzero weights for the all arcs. Moreover, none of $x_{ij}$ or $x^{kl}$ is vanished. Then we can check that $N(SAT^3)$ is a $3$-bridge link which satisfies the rectangle condition. Actually, the index $3$ of $SAT^3$ makes the three rectangle types one by one. Now, assume that the index is greater than $3$. Then, especially, the rectangles carried by the green arcs of the left sides are preserved after the additional half Dehn twists as in Figure~\ref{A13}. Then, by using a similar argument we used to prove Theorem~\ref{T0} we complete the proof of this corollary.
		\end{proof}

\end{document}